\documentclass[11pt]{article}

\usepackage[T1]{fontenc}
\usepackage[utf8]{inputenc}
\usepackage{lmodern}

\usepackage[a4paper,margin=1in]{geometry}
\usepackage{amsmath,amssymb,amsthm,mathtools}
\usepackage{enumitem}
\usepackage{mathrsfs}
\usepackage{hyperref}
\usepackage{stmaryrd}
\usepackage{xcolor}
\usepackage{tikz}

\theoremstyle{plain}
\newtheorem{theorem}{Theorem}[section]
\newtheorem{proposition}[theorem]{Proposition}
\newtheorem{lemma}[theorem]{Lemma}
\newtheorem{corollary}[theorem]{Corollary}

\theoremstyle{definition}

\theoremstyle{remark}
\newtheorem{remark}[theorem]{Remark}

\newcounter{proofstep}

\title{Blaschke--Santal{\'o} diagram for the volume, the diameter, and the Cheeger constant}
\author{Zakaria Fattah\footnote{Laboratoire de Math\'ematiques, Informatiques, Physique et Applications. N\^{\i}mes Universit\'e, Site des Carmes, 7 Place Gabriel P\'eri, 30000 N\^{\i}mes, France. Email address: \texttt{zakaria.fattah@unimes.fr}}}
\date{\today}

\begin{document}

\maketitle

\begin{abstract}
We study the Blaschke--Santal{\'o} diagram associated with the volume, the diameter, and the Cheeger constant. In the class of bounded open subsets of $\mathbb{R}^m$, $m\ge 2$, we give a complete description of the diagram. In the class of convex bodies of $\mathbb R^m$, we prove that the diagram is closed and simply connected as it is given by the region between two continuous functions for which qualitative properties such as  monotonicity, local behavior, and growth estimates are studied.   
\end{abstract}

% \tableofcontents
\section{Introduction and main results}

Let \(m\ge2\), and let \(\Omega\subset\mathbb R^m\) be a bounded open set. The
Cheeger constant of \(\Omega\) is defined by
\begin{equation}\label{eq:cheeger}
    h(\Omega):=
\inf\left\{
\frac{P(E)}{|E|}:\ E\subset\Omega,\ E\text{ has finite perimeter},\ |E|>0
\right\},
\end{equation}
where \(P(E)\) denotes the De Giorgi perimeter. This quantity has been first introduced by Jeff Cheeger to provide a lower bound for the first eigenvalue of the
Laplacian \cite{Cheeger1970}. It has since become a
classical functional in geometric analysis and shape optimization, see \cite{Leonardi2015} for a review on the Cheeger problem. 

Any admissible set \(E\subset\Omega\) realizing the infimum \eqref{eq:cheeger} is called a Cheeger set of \(\Omega\). For convex domains, it has been proved that the corresponding Cheeger sets are unique and convex, see  \cite{AlterCaselles2009,CasellesChambolleNovaga2010}, while for nonconvex domains, questions of existence, uniqueness, and regularity are more closely related to the geometry of the domain, see, for example, \cite{Leonardi2015,LeonardiNeumayerSaracco2017,LeonardiPratelli2016,parini_review}. These results give qualitative information on Cheeger sets, but they do not provide general constructive results.

In \cite{KawohlLachandRobert2006}, Kawohl and Lachand-Robert  provide an explicit description of the Cheeger set of planar convex bodies. However, no similar result is known for arbitrary convex bodies in higher dimensions. Consequently, in dimensions \(m\ge3\), it is unlikely to find explicit values of the Cheeger constant. We are only aware of the result of \cite{parini_bobkov},where the authors provide an explicit description of Cheeger sets for rationally invariant domains. 

Motivated by obtaining theoretical estimates of the Cheeger constant, it is natural to study sharp inequalities relating it to more elementary geometric quantities that can be explicitly computed. This leads to consider Blaschke--Santal\'o diagrams that are  tools allowing to describe the existing sharp inequalities relating given shape functionals defined on a some class of sets. Such diagrams have been introduced for the first time in \cite{Blaschke1915} by Blaschke who studied extremal problems for planar convex bodies, in particular minimizing the area among convex sets of prescribed constant width. Then, came the work of Santal\'o \cite{Santalo1961} who presented different complete systems of inequalities involving purely geometric functionals, namely, the area, the perimeter, the inradius, the diameter, the circumradius and the minimal width.  

In recent years, Blaschke--Santal\'o diagrams have been studied for various triplets of shape functionals in particular for the class of convex bodies. When dealing with purely geometric functionals, one is interested in finding explicit descriptions of the diagrams; we refer to the following non-exhaustive list of papers \cite{cifre3,MR3653891,brand,DelyonHenrotPrivat2022,cifre4,cifre}, where several Blaschke--Santal\'o diagrams have been fully characterized. We note that there are still some diagrams involving purely geometric functionals on planar convex sets that remain open. We refer to the discussion in the introduction of \cite{DelyonHenrotPrivat2022} for a state of the art of the still missing geometric diagrams. On the other hand, several authors have investigated diagrams involving combinations of geometric and spectral functionals. Since an explicit characterization of such diagrams is generally difficult, and in some cases even impossible to obtain, the focus in this setting is instead on establishing qualitative properties of the diagrams. We refer to \cite{freitas_antunes,MR3068840,zbMATH01308639,ftouhi_henrot,FtouhiLamboley2021,LZ,zbMATH06736468} for a non-exhaustive list of works in this context. At last, we note that Blaschke--Santal\'o diagrams have also been investigated from a numerical perspective; we refer to \cite{BogoselButtazzoOudet2023,Ftouhi2025Numerical,MartinetFtouhi2026} for various numerical approaches.  

More recently, there has been an interest in the study of Blaschke--Santal\'o diagrams relating the Cheeger constant to simple geometric quantities. We refer to \cite{ftouhi_inequality,ftouhi_cheeger,FtouhiMasielloPaoli2024} for complete descriptions of some relevant diagrams in the class of planar convex bodies and interesting conjectures. 

In the present work, we focus on the study of diagrams involving the Cheeger constant, the diameter and the volume in any dimension $m$ and for two classes of sets: \(\mathcal O^m\) the class of nonempty bounded open subsets of
\(\mathbb R^m\), and \(\mathcal K^m\) the class of convex bodies in
\(\mathbb R^m\). More precisely, we consider the following two diagrams: 
\begin{equation}\label{eq:def-convex-diagram}
\mathring{\mathcal D}_m
:=
\left\{
\bigl(D(\Omega),h(\Omega)\bigr):
\Omega\in\mathcal O_1^m
\right\},
\qquad
\mathcal D_m
:=
\left\{
\bigl(D(K),h(K)\bigr):
K\in\mathcal K_1^m
\right\},
\end{equation}
where \(|\cdot|\) denotes the \(m\)-dimensional
Lebesgue measure, \(D(E):=\sup\{|x-y|:\ x,y\in E\}\) the diameter of a bounded set \(E\subset\mathbb R^m\) and $\mathcal O_1^m:=\{\Omega\in\mathcal O^m:|\Omega|=1\},
$ and $
\mathcal K_1^m:=\{K\in\mathcal K^m:|K|=1\}$.

We begin by stating the result obtained for the class of bounded open sets, where we have obtained a complete description of the corresponding diagram.
\begin{theorem}[The Blaschke--Santal\'o diagram in the class of bounded open sets]
\label{thm:main-open-diagram}
Let \(B\subset\mathbb R^m\) be the Euclidean ball of volume \(1\). Then
\[
\mathring{\mathcal D}_m
=
\bigl(D(B),+\infty\bigr)\times\bigl(h(B),+\infty\bigr)
\cup
\{(D(B),h(B))\}.
\]
\end{theorem}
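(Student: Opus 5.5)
The plan is to prove the two inclusions separately, starting with the necessary conditions. For any $\Omega\in\mathcal O_1^m$, the isodiametric inequality gives $D(\Omega)\ge D(B)$, and the Faber--Krahn type inequality for the Cheeger constant gives $h(\Omega)\ge h(B)$. The latter follows from the isoperimetric inequality: for every admissible $E\subset\Omega$ with $|E|\le 1$,
\[
\frac{P(E)}{|E|}\ \ge\ \frac{P(B)}{|B|}\,|E|^{-1/m}\ \ge\ h(B).
\]
Equality in the isodiametric inequality forces $\Omega$ to coincide with a ball up to null sets. For an open set of the same volume, this means $\Omega\subset\overline B$ up to translation and $|B\setminus\Omega|=0$. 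For such $\Omega$ I will check that $h(\Omega)=h(B)$: the Cheeger constant is monotone under inclusion, and $h(\Omega)\le P(B\cap\Omega)/|B\cap\Omega|$ where, up to null sets, the set $B\cap\Omega$ equals $B$. Since $P$ is defined on measure classes, this gives $h(\Omega)=h(B)$. Conversely, equality $h(\Omega)=h(B)$ forces the Cheeger set to be a ball of full volume, hence $|E|=1$ and $\Omega=B$ a.e., which gives diameter $D(B)$. So the only boundary point attained is $(D(B),h(B))$, and every other point lies in the open quadrant $(D(B),\infty)\times(h(B),\infty)$.

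For the converse inclusion, fix $d>D(B)$ and $t>h(B)$, and construct $\Omega$ with $|\Omega|=1$, $D(\Omega)=d$, $h(\Omega)=t$. I will use a continuity argument on explicit families. A convenient family is a disjoint union $\Omega=B_r\cup U$, where $B_r$ is a ball of radius $r$ and $U$ is a set that carries the diameter and has large Cheeger constant: for example a union of many tiny balls, or a thin needle placed at distance so the total diameter equals $d$. Then
\[
h(\Omega)=\min\bigl(h(B_r),\,h(U)\bigr),
\]
since a Cheeger set of a disjoint union can be taken in one component; this holds because $P/|\cdot|$ of a union is at least the minimum of the ratios. To realize the value $t$, I choose $r$ with $h(B_r)=m/r=t$. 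This requires $|B_r|<1$, which holds precisely because $t>h(B)$. The leftover volume $1-|B_r|$ is put into $U$, made of $N$ tiny disjoint balls of radius $\varepsilon$ with $m/\varepsilon>t$. These are arranged, possibly along a segment, so that together with $B_r$ the diameter is exactly $d$. This is possible for any $d>2r$; since $2r<D(B)<d$, any $d>D(B)$ works. Exact volume matching can be obtained by adjusting one of the small balls continuously.

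The main obstacle is ensuring the small pieces do not spoil the exact values. Tiny balls of radius $\varepsilon$ have Cheeger constant $m/\varepsilon$, which is large. However, I must check that no union of pieces has a lower ratio than $t$; the ratio inequality for disjoint unions handles this. The diameter can be tuned continuously by moving the farthest small ball, and the volume by its radius, while keeping $m/\varepsilon>t$. Openness and boundedness are clear. This completes the description: the diagram equals $(D(B),\infty)\times(h(B),\infty)\cup\{(D(B),h(B))\}$.
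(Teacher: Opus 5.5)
Your first inclusion is correct. It is actually more careful than the paper about the equality cases: you treat open sets that coincide with a ball only up to a null set, and you prove $h(\Omega)=h(B)\Rightarrow D(\Omega)=D(B)$ via existence of a Cheeger set in a bounded open set.

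The gap is in the construction for the reverse inclusion: you never check that the leftover volume $1-|B_r|$ can actually be placed. Your assertion that the arrangement ``is possible for any $d>2r$'' cannot be true. For $2r<d<D(B)$, no set of volume $1$ and diameter $d$ exists at all, by the isodiametric inequality.

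The binding constraint is volume, not diameter. Write $d=(1+\eta)D(B)$. The closed convex hull $C$ of $\Omega$ has diameter $d$, hence $|C|\le(1+\eta)^m$. Your small pieces must therefore put total volume $1-|B_r|$ into $C\setminus B_r$, a region of volume at most $(1+\eta)^m-|B_r|$. That is a filling density tending to $1$ as $\eta\to0$.

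Congruent balls cannot achieve this. Packings of congruent balls have density bounded by a dimensional constant $\delta_m<1$ (e.g.\ $\pi/\sqrt{12}$ in the plane). Your balls are small compared with $C\setminus B_r$ as soon as $t$ is large, since $\varepsilon<r=m/t$. So for $t$ large and $d$ close to $D(B)$, the set you describe does not exist. Placing the balls along a segment, or using a thin needle, is much worse: such a configuration carries volume only of order $d\,r^{m-1}$.

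What is missing is a family of pieces, each with Cheeger constant larger than $t$, that can fill the region outside $B_r$ with density arbitrarily close to $1$. This is exactly where the hypothesis $d>D(B)$ must be used. The paper takes $N$ thin concentric shells $\{s_i-\tau<|z|<s_i\}$ around $B(0,r)$, with $s_i=r+i\Delta$, $s_N=d/2$ and $\Delta<1/t$. This family has three properties:
\begin{enumerate}
\item Each shell is self-Cheeger, with Cheeger constant at least $1/\tau\ge 1/\Delta>t$.
\item The outermost shell fixes the diameter at exactly $d$ for every $\tau$.
\item As $\tau$ runs over $(0,\Delta)$, the volume moves continuously from $|B(0,r)|<1$ towards $|B(0,d/2)|>1$, so the intermediate value theorem gives volume exactly $1$.
\end{enumerate}
A fine grid of small, slightly separated cubes would also work, with extra care to realize the diameter exactly. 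With such a replacement for your tiny balls, the rest of your argument, namely $h(\Omega)=\min(h(B_r),h(U))=t$, goes through.
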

The result of this theorem can be read as the fact that in the class of bounded open sets, the diagram is completely determined by two sharp estimates: the
isodiametric inequality i.e., $D(\Omega)/|\Omega|^{\frac{1}{m}}\ge D(B)/|B|^{\frac{1}{m}}$ and the Faber--Krahn inequality for the Cheeger constant i.e., $|\Omega|^{1/m}h(\Omega) \ge |B|^{1/m}h(B).$, where $B$ is a ball. To prove this result, we show that every pair $(x,y)\in (D(B),+\infty)\times(h(B),+\infty)$
is realized by a bounded open set of volume \(1\). The situation is
different under the convexity constraint: the admissible set is no longer a quadrant, and its description is given by the
following theorem.
\begin{theorem}[The Blaschke--Santal\'o diagram in the class of convex bodies]
\label{thm:main-convex-bodies-diagram}
Let \(B\subset\mathbb R^m\) be the Euclidean ball of volume \(1\), and set
\(d_0:=D(B)\). For every \(x\ge d_0\), define
\begin{equation}\label{eq:def-boundary-functions}
f(x):=\inf\bigl\{h(K):K\in\mathcal K_1^m,\ D(K)=x\bigr\},
\qquad
g(x):=\sup\bigl\{h(K):K\in\mathcal K_1^m,\ D(K)=x\bigr\}.
\end{equation}
Then the following assertions hold.
\begin{enumerate}[label=\textnormal{(\roman*)}]
\item The diagram \(\mathcal D_m\) is the region between the graphs of \(f\)
and \(g\), namely
\[
\mathcal D_m
=
\bigl\{(x,y)\in\mathbb R^2:\ x\ge d_0,\ f(x)\le y\le g(x)\bigr\}.
\]

\item The functions \(f\) and \(g\) are continuous on \([d_0,+\infty)\).

\item The function \(f\) is strictly increasing on \([d_0,+\infty)\).

\item If \(m=2\), then \(g\) is strictly increasing on \([d_0,+\infty)\).

\item There exist constants \(c_m,C_m>0\), depending only on \(m\), such that,
for every \(x>d_0\),
\[
c_m x^{1/(m-1)}
\le f(x)\le g(x)\le C_m x^{m-1}.
\]
Moreover, the exponents \(1/(m-1)\) and \(m-1\) are sharp. 

\item The lower boundary function \(f\) has a right derivative at \(d_0\), and
$
f'_+(d_0)=0.$
\end{enumerate}
\end{theorem}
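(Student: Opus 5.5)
The plan is to prove the six assertions roughly in the order (ii), (i), (v), (iii)--(iv), (vi). The basic tools are:
\begin{itemize}[label={}]
\item the continuity of $h$, $D$ and $|\cdot|$ with respect to Hausdorff convergence on $\mathcal K^m$;
\item the scaling laws $h(tK)=h(K)/t$, $D(tK)=tD(K)$ and $|tK|=t^m|K|$;
\item Blaschke selection on sets of bounded diameter;
\item the classical two-sided bounds $\frac{1}{r(K)}\le h(K)\le \frac{m}{r(K)}$ for convex $K$, where $r$ is the inradius (the upper bound comes from $P(K)/|K|\le m/r$ for convex sets).
\end{itemize}

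\textbf{Assertions (i) and (ii).} For fixed $x$, the set $\{K\in\mathcal K_1^m: D(K)=x\}$ is, up to translation, compact in the Hausdorff metric. Hence $f(x)$ and $g(x)$ are attained, and they are finite because volume $1$ and diameter $x$ force $r(K)\ge c(m,x)>0$. For (i), the inclusion $\subset$ is immediate. For the reverse inclusion, fix $x$ and take minimizer and maximizer $K_0,K_1$. I would interpolate through the Minkowski path $K_t=(1-t)K_0+tK_1$, renormalized to unit volume. This path changes the diameter, so I correct it as follows: since $D$ is continuous and scaling is not available (volume is fixed), I will instead use the connectedness of the level set $\{D=x\}\cap\mathcal K_1^m$. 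To prove connectedness, I would try connecting $K_0$ to a "standard" body of diameter $x$ inside the level set. Concretely, take a segment $[p,q]$ of length $x$ realizing $D(K_0)$. Move $K_0$ continuously by $K\mapsto \mathrm{conv}(K_s\cup[p,q])$ with $K_s$ shrinking to the segment, rescaled transversally by an affine map fixing the $[p,q]$ direction so that the volume stays $1$. The diameter stays $x$ as long as the transversal sizes are small, which can be ensured after an initial transversal contraction with longitudinal stretch. I expect this to be the main technical obstacle. A cleaner alternative I would try first is to use the continuous map $(s,t)\mapsto$ (diameter, $h$) on a two-parameter family, together with an intermediate value argument on the connected level set.

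For (ii), upper semicontinuity of $f$ and lower semicontinuity of $g$ follow by perturbing the optimizer. Take the optimal $K$ for $x$ and apply $A_\varepsilon=\mathrm{diag}(1+\varepsilon,(1+\varepsilon)^{-1/(m-1)},\dots)$ along the diameter direction; this gives volume-one bodies with diameters near $x$ covering a neighborhood, by continuity and the intermediate value theorem. The other semicontinuity follows from compactness: take optimizers $K_n$ for $x_n\to x$ and extract a convergent subsequence.

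\textbf{Assertion (v).} For $K\in\mathcal K_1^m$, the volume bound $1=|K|\le C\,r\,D^{m-1}$ gives $h\le m/r\le C D^{m-1}$. Also $1\ge c\,r^{m-1}D$, since $K$ contains the convex hull of the inball and a diameter chord. Hence $h\ge 1/r\ge c D^{1/(m-1)}$. Sharpness comes from thin cylinders $[0,L]\times B^{m-1}_\rho$ (with $r\approx\rho$) and flat slabs $B^{m-1}_R\times[0,\delta]$ (with $r\approx\delta$), both normalized to unit volume.

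\textbf{Assertions (iii) and (iv).} Monotonicity of $f$ would be proved by showing that for $x<y$ there is a body with diameter $y$ and smaller $h$ than the optimizer for $x$. The plan is to take the optimizer $K$ at $y$ and deform it via the map that shortens the diameter direction while expanding transversally with the volume preserved. Using the Cheeger set $E$ of $K$ and $P(AE)/|AE|$, I would show $h$ decreases strictly under a suitable symmetrization; Steiner symmetrization is the candidate, since it preserves volume, does not increase the diameter, and does not increase $h$. Strictness requires care, and I would combine it with continuity and the intermediate value theorem. For $m=2$ and $g$, the natural candidate is the explicit Kawohl--Lachand-Robert formula together with an analysis of maximizers.

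\textbf{Assertion (vi).} For the ball, $f(d_0)=h(B)$, and the isodiametric stability inequality gives that $D(K)-d_0$ controls the deviation from the ball only to a power, while $h$ is bounded below by $h(B)$ by Faber--Krahn. So it suffices to exhibit ellipsoids or small perturbations $K_\varepsilon$ with $D(K_\varepsilon)-d_0\sim\varepsilon$ and $h(K_\varepsilon)-h(B)=O(\varepsilon^2)$. The ball is a critical point of $h$ under volume-preserving perturbations, whereas the diameter varies at first order, for instance for the ellipsoids $\mathrm{diag}(1+\varepsilon,\dots)$. This yields $f'_+(d_0)=0$.
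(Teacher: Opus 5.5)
Assertions (i), (iii) and (iv) are not actually proved.

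\textbf{Assertion (i).} You correctly reduce (i) to showing that $h$ takes every value of $[f(x),g(x)]$ on the level set $\{K\in\mathcal K_1^m: D(K)=x\}$. However, you never produce a path inside this set. In your sketch, the transversal re-expansion to volume $1$ gives no control of the diameter, and your remedy (a longitudinal stretch) leaves the level set.

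The Minkowski path you discarded does work if you normalize differently. After rigid motions, let both optimizers $K_0,K_1$ contain the diametral segment $[0,xe_1]$. Then $M_t=(1-t)K_0+tK_1$ contains $0$ and $xe_1$, so subadditivity gives $D(M_t)=x$ exactly, and $|M_t|\ge 1$ by Brunn--Minkowski. Restore unit volume by the transversal contraction $(y_1,y')\mapsto(y_1,|M_t|^{-1/(m-1)}y')$ instead of a homothety. This map is $1$-Lipschitz and fixes $0$ and $xe_1$, so the diameter stays $x$. Continuity of $h$ and the intermediate value theorem then give $\{x\}\times[f(x),g(x)]\subset\mathcal D_m$. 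Completed this way, your route is much shorter than the paper's. The paper normalizes homothetically, so the diameter drifts, and it must then combine closedness of $\mathcal D_m$, the classes $\mathcal C_R(d)$, and a winding-number/supremum argument.

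\textbf{Assertions (iii) and (iv).} Your first sentence for (iii) has the inequality reversed. More importantly, neither tool you propose gives the strict decrease you need. Steiner symmetrization only gives $h(S_uK)\le h(K)$, at an uncontrolled diameter, and volume-preserving linear maps give no general mechanism for decreasing $h$.

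The missing ingredient is a strictness lemma. If $K\in\mathcal K_1^m$ is not a ball, its Cheeger set $C_K$ is unique, convex, $C^{1,1}$ and not a ball. Hence there is a direction $u$ such that $C_K$ is symmetric with respect to no hyperplane orthogonal to $u$. Strict convexity of $z\mapsto\sqrt{1+|z|^2}$ then gives $P(S_u^tC_K)<P(C_K)$, hence $h(S_u^tK)<h(K)$ for all $t\in(0,1]$, with $S_u^tK\to K$ as $t\to0^+$. So the ball is the only local minimizer of $h$ on $\mathcal K_1^m$. Since $f$ is continuous and $f(x)\to+\infty$ by (v), a failure of strict monotonicity produces a local minimum $x_0>d_0$ of $f$. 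Its optimizer would be a non-ball local minimizer of $h$, a contradiction.

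Assertion (iv) follows the same way with maxima, but only given that $h$ has no local maximizer on $\mathcal K_1^2$. That is a genuinely planar result, which the paper imports from Ftouhi's work; pointing to Kawohl--Lachand-Robert does not supply it.

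\textbf{Assertions (ii), (v), (vi).} In (ii), $A_\varepsilon$ only controls diameters above $x$: $D(A_\varepsilon K)=(1+\varepsilon)x$ for $\varepsilon>0$. For $\varepsilon<0$ the diameter need not decrease: if $K$ has constant width $w$, then $D(AK)=w\|A\|\ge w$ for every $A\in SL(m)$. So you do not get bodies near the optimizer with diameter slightly below $x$, which left-hand semicontinuity requires. Use instead the normalized parallel bodies $|K+tB_1|^{-1/m}(K+tB_1)$, whose diameter is $D(K)+(2-W_1(K)D(K))t+o(t)$ with negative slope when $K$ is not a ball (as in the paper). The normalized combinations $(1-s)K+sB$ also work.

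In (vi) your strategy is the paper's. However, ``the ball is a critical point of $h$'' is unjustified as stated, since $h$ is not known to be differentiable. What makes it work is the one-sided chain $h(B)\le f(x_t)\le h(A_tB)\le P(A_tB)$, with $f(d_0)=h(B)=P(B)$ and the explicit expansion $P(A_tB)=P(B)+o(t)$.

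Assertion (v) is correct by a different route. You use $1/r\le h\le m/r$ together with $|K|\lesssim rD^{m-1}$ and $r^{m-1}D\lesssim|K|$. The paper uses $P/(m|K|)\le h\le P/|K|$ with perimeter--diameter inequalities. Both routes are equally short, but $h\ge 1/r$ relies on convexity and should be cited or proved.
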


This theorem provides a global description of the diagram in the class of convex bodies. The first
assertion states that the diagram is a simply connected region of $\mathbb R^2$. The following assertions describe the continuity and monotonicity of these boundary functions. Indeed, it is shown in any dimension that the lower boundary is given  by the graph of a continuous and strictly increasing in every dimension. The strict monotonicity of the upper boundary is only proved in dimension \(2\). A generalization of this result seems out of reach for the moment due to the lack of information on the Cheeger problem in higher dimensions $m\ge 3$. The growth estimates provide the sharp powers of the growth of the boundary functions $f$ and $g$ when $x\to +\infty$, and the identity
$f'_+(d_0)=0$ describes the first-order behavior of the lower boundary near the point $(D(B),h(B))$ corresponding to the Euclidean ball.

The paper is organized as follows. Section \ref{sec:proof_open} is devoted to the proof of Theorem~\ref{thm:main-open-diagram}. In Section \ref{sec:notations}, we introduce some notations and notions from convex geometry, and present some important lemmas used in the sequel. Finally, in the involved Section \ref{sec:thm_conv}, we prove Theorem~\ref{thm:main-convex-bodies-diagram} and the corollaries describing
the geometry of the convex diagram.

%========================================================
\section{Proof of Theorem \ref{thm:main-open-diagram}: the diagram of open sets}\label{sec:proof_open}
%========================================================
We now prove Theorem~\ref{thm:main-open-diagram}. Recall that
\[
\mathring{\mathcal D}_m
:=
\left\{
\bigl(D(\Omega),h(\Omega)\bigr):
\Omega\in\mathcal O_1^m
\right\},
\qquad
\mathcal O_1^m:=\{\Omega\in\mathcal O^m:|\Omega|=1\}.
\]
Let \(B\subset\mathbb R^m\) be the Euclidean ball of volume \(1\). We prove that
\[
\mathring{\mathcal D}_m
=
\bigl(D(B),+\infty\bigr)\times\bigl(h(B),+\infty\bigr)
\;\cup\;
\{(D(B),h(B))\}.
\]
\begin{proof}
%Let \(R_0:=\omega_m^{-1/m}\), so that the unit-volume ball \(B\) has radius \(R_0\), diameter \(D(B)=2R_0\), and Cheeger constant \(h(B)=m/R_0\).

Let us first show the direct inclusion. Consider \(\Omega\in\mathcal O_1\). By the isodiametric inequality, one has \(D(\Omega)\ge D(B)\), with equality if and only if \(\Omega\) is a ball of volume \(1\). Moreover, it is well known that the ball minimizes the Cheeger constant among sets of a given volume. Therefore \(h(\Omega)\ge h(B)\). Thus, every point \((D(\Omega),h(\Omega))\) belongs to \((D(B),+\infty)\times (h(B),+\infty)\cup\{(D(B),h(B))\}\), as claimed.

Let us now prove the reverse inclusion. Fix \(x>D(B)\) and \(y>h(B)\), and set \(r:=m/y\). Since \(B\) is the unit-volume ball, one has \(h(B)=2m/D(B)\). Hence, from \(y>h(B)\), we get
\[
r=\frac m y<\frac m{h(B)}=\frac{D(B)}2<\frac x2.
\]

Thus, \(B(0,r)\) has Cheeger constant \(y\) and volume strictly smaller than \(1\).

%We then enlarge this ball by adding a finite family of thin spherical shells, while keeping the Cheeger constant unchanged. 
Now, choose an integer \(N\ge 1\) so large that $N>(xy-2m)/2$ and define \(\Delta:=(x/2-r)/N\) and \(s_i:=r+i\Delta\)  for \(i\in \llbracket 1,N \rrbracket\). For \(t\in(0,\Delta]\), let us set
\[
A_i(t):=\{z\in\mathbb R^m:\ s_i-t<|z|<s_i\}, \qquad i\in \llbracket 1,N \rrbracket,
\]
and
\[
\Omega^t:=B(0,r)\cup \bigcup_{i=1}^N A_i(t).
\]

For \(0<t<\Delta\), the connected components of \(\Omega^t\) are pairwise disjoint, and the outermost shell reaches radius \( x/2\); hence \(D(\Omega^t)=x\) for every \(t\in(0,\Delta)\).
We choose \(t^*\in(0,\Delta)\) such that \(|\Omega^{t^*}|=1\).

It remains to compute the Cheeger constant of \(\Omega^{t^*}\). Since its connected components are disjoint and separated by a positive distance, \(h(\Omega^{t^*})\) is the minimum of the Cheeger constants of its connected components. It is well known that annuli are self-Cheeger sets, see for instance \cite[Theorem 2]{KrejcirikLeonardiVlachopulos}. Therefore, for every
\(i\in[\![1,N]\!]\), we have
\[
h(A_i(t))
=
m\,\frac{s_i^{m-1}+(s_i-t)^{m-1}}{s_i^m-(s_i-t)^m}.
\]

By the mean value theorem,
\(
s_i^m-(s_i-t)^m\leq m t s_i^{m-1}.
\)
Therefore,
\[
h(A_i(t))
=
m\,\frac{s_i^{m-1}+(s_i-t)^{m-1}}
{s_i^m-(s_i-t)^m}
\geq
\frac1t
\geq
\frac1\Delta .
\]

Since \(r=m/y\), the condition \(N>(xy-2m)/2\) is equivalent to
\(
\Delta=(x/2-r)/N<1/y
\). Hence, for every \(i=1,\ldots,N\) and every \(t\in(0,\Delta)\), we have
\(
h(A_i(t))\ge 1/\Delta>y.
\)
Therefore,
\[
h(\Omega^{t^*})
=
\min\Bigl(
h(B(0,r)),\, h(A_1(t^*)),\,\ldots,\,h(A_N(t^*))
\Bigr)
=
y.
\]

We have thus constructed an open set \(\Omega^{t^*}\in\mathcal O_1\) such that $(D(\Omega^{t^*}),h(\Omega^{t^*}))=(x,y)$, which proves the inclusion
\[
(D(B),+\infty)\times(h(B),+\infty)\subset \mathring{\mathcal D}_m.
\]

Finally, the point \((D(B),h(B))\) being realized by the unit-volume ball itself, the proof is complete.
\end{proof}

%========================================================
\section{Notations and important lemmas}\label{sec:notations}
%========================================================
In this section, we fix notation from convex geometry and prove some  quantitative estimates used in the sequel. We first derive a lower bound for the inradius in
terms of the volume and the diameter . We then prove a two-sided estimate for the
Cheeger constant in terms of volume and diameter; on the class \(\mathcal K_1^m\),
this gives the growth bounds for the boundary functions of the diagram. Finally,
we give a quantitative estimate for the Cheeger constant with respect to the
Hausdorff distance.
\subsection{Convex bodies and notation}\label{subsec:convex-bodies-notation}
%We begin with the Hausdorff topology on convex bodies and the notation forsupport functions, diameter, inradius, quermassintegrals, and mixed volumes.

If $n, m\in \mathbb{N}$ such that $n\leq m$, we denote by $\llbracket n,m \rrbracket$ the set of integers $\{n,n+1,\dots,m\}$.

A convex body in \(\mathbb R^m\) is a compact convex subset of
\(\mathbb R^m\) with nonempty interior. For \(K\in\mathcal K^m\), we set
$|K|:=\mathcal H^m(K),$ and $P(K):=\mathcal H^{m-1}(\partial K)$,
where \(\mathcal H^k\) denotes the \(k\)-dimensional Hausdorff measure.
Since \(K\) is convex, this notion of perimeter coincides with the
De Giorgi perimeter; see for instance \cite[Example~12.6, p.~124]{Maggi2012}.

We denote by \(\mathcal K^m\) the class of convex bodies in
\(\mathbb R^m\), and by
$
\mathcal K_1^m
:=
\{K\in\mathcal K^m:\ |K|=1\}
$
the corresponding volume-normalized class. Convex bodies will be considered
up to translations.

The support function of a convex body \(K\in\mathcal K^m\) is defined as follows
\[
h_K:u\in\mathbb S^{m-1}\longmapsto\sup_{x\in K}\langle x,u\rangle.
\]

It is well known that the Hausdorff distance on the class \(\mathcal K^m\) is given by
\[
d_H(K,L)
:=
\|h_K-h_L\|_\infty
=
\sup_{u\in\mathbb S^{m-1}}
|h_K(u)-h_L(u)|,
\]
see, for instance, \cite[Lemma 1.8.14]{Schneider2014}.

Thus, \(K_n\underset{n\rightarrow+\infty}{\longrightarrow} K\) in the Hausdorff metric if and only if
\(h_{K_n}\underset{n\rightarrow+\infty}{\longrightarrow} h_K\) uniformly on \(\mathbb S^{m-1}\).

We recall the definitions of the diameter and the inradius
\[
D(K):=\sup\{|x-y|:\ x,y\in K\},
\]
and
\[
r(K):=
\sup\{r>0:\ \exists x\in\mathbb R^m
\text{ such that } B(x,r)\subset K\}.
\]

%\textcolor{blue}{We also recall the quermassintegrals \(W_0(K),\dots,W_m(K)\), they appear in
%Steiner's formula} \ilias{Toute cette phrase est à refaire}
% \textcolor{blue}{\(\star\)}We also recall that the quermassintegrals \(W_0(K),\dots,W_m(K)\) are the
% coefficients in Steiner's formula
% \[
% |K+tB_1|
% =
% \sum_{i=0}^{m}
% \binom{m}{i}W_i(K)t^i,
% \qquad t\ge0,
% \]
% where \(B_1\) denotes the Euclidean unit ball. In particular,
% $W_0(K)=|K|,$ and $
% mW_1(K)=P(K)$.

% \underline{We shall} use mixed volumes only through Minkowski's polynomial formula. \ilias{cette phrase est bizarre}
% For \(K,L\in\mathcal K^m\), we write \(W_k(K,L)\), \(k=0,\dots,m\), for
% the coefficients determined by
% \begin{equation}\label{eq:minko}
%    |(1-t)K+tL|
% =
% \sum_{k=0}^{m}
% \binom{m}{k}
% (1-t)^{m-k}t^k
% W_k(K,L),
% \qquad t\in[0,1]. 
% \end{equation}
% \textcolor{blue}{\(\star\)}For \(K,L\in\mathcal K^m\), Minkowski's theorem gives 
% \begin{equation}\label{eq:minko}
%    |(1-t)K+tL|
% =
% \sum_{k=0}^{m}
% \binom{m}{k}
% (1-t)^{m-k}t^k
% W_k(K,L),
% \qquad t\in[0,1],
% \end{equation}
% where \(W_k(K,L)\), \(k=0,\dots,m\), are the corresponding mixed volumes.

% \textcolor{blue}{\(\star\)}The following continuity property will be used repeatedly: if \(K_n\to K\) and \(L_n\to L\) in the
% Hausdorff metric, then
% \[
% W_k(K_n,L_n)\underset{n\rightarrow +\infty}{\longrightarrow} W_k(K,L),
% \qquad k=0,\dots,m.
% \]
For \(K,L\in\mathcal K^m\), Minkowski's theorem, see for instance \cite[Theorem 5.1.7]{Schneider2014},  gives 
\begin{equation}\label{eq:minko}
 \forall t\in[0,1],\ \ \ \   |(1-t)K+tL|
=
\sum_{k=0}^{m}
\binom{m}{k}
(1-t)^{m-k}t^k
W_k(K,L),
\end{equation}
where the coefficients \((W_k(K,L))_{k\in \llbracket 0,m \rrbracket}\) are called the mixed volumes of the sets $K$ and $L$.

The particular case $L=B_1$, where $B_1$ is the unit ball is of intrinsic interest and leads to the definition of the so-called quermassintegrals of the body $K$ given by $W_k(K):=W_k(K,B_1)$ for $k\in \llbracket 0,m \rrbracket$. 
This allows to state the following special case known as Steiner's formula
\begin{equation}\label{eq:steiner}
\forall t\ge0,\ \ \ \     |K+tB_1|
=
\sum_{i=0}^{m}
\binom{m}{i}W_i(K)t^i.
\end{equation}
Note that in particular, \(W_0(K)=|K|\) and \(mW_1(K)=P(K)\).

The following classic continuity property, stated for instance in \cite[Page 280]{Schneider2014}, will be used repeatedly: if \(K_n\underset{n\to +\infty}{\longrightarrow} K\) and
\(L_n\underset{n\to +\infty}{\longrightarrow} L\) in the Hausdorff metric, then
\[
\forall k\in \llbracket 0,m \rrbracket,\ \ \ \ W_k(K_n,L_n)\underset{n\rightarrow +\infty}{\longrightarrow} W_k(K,L).
\]

% We now recall the notion of a tangential body. Let \(K\in\mathcal K^m\).
% A point \(x\in\partial K\) is called regular if there exists a unique supporting
% hyperplane to \(K\) at \(x\). We denote by \(U(K)\) the set of all outer unit
% normals to \(K\) at regular boundary points. The form body of \(K\) is defined by
% \[
% K^*
% :=
% \bigcap_{u\in U(K)}
% \{x\in\mathbb R^m:\langle x,u\rangle\le 1\}.
% \]

% The convex body \(K\) is called tangential if it is homothetic to \(K^*\).
% We refer to~\cite[Definition~1.1]{Ftouhi2025}.
Finally, it is classical that the volume, the quermassintegrals, and  the diameter are
continuous on \(\mathcal K^m\) with respect to the Hausdorff convergence.
For a standard background on these notions, we refer to \cite{Schneider2014}.

\subsection{Geometric estimates involving the diameter, the inradius, and the Cheeger constant}
% We next derive a lower bound for the inradius in terms of volume and
% diameter, prove two-sided diameter estimates for the Cheeger constant with
% optimal exponents, and establish a Lipschitz estimate with respect to the
% Hausdorff distance for monotone homogeneous functionals on convex bodies.
% The estimates in this subsection will be used in three places. The inradius
% bound turns a diameter bound, at fixed volume, into a uniform interior-ball
% condition, which is needed for the Hausdorff-continuity estimate for \(h\) \ilias{Tu veux dire quoi par the Hausdorff continuity estimate for $h$? Il faut être clair et ne pas laisser le lecteur galerer avec des nomenclatures floues}. The
% two-sided estimate for the Cheeger constant gives, on \(\mathcal K_1^m\), both
% a diameter bound at fixed Cheeger level \ilias{what is "fixed cheeger level" ?} and the growth bounds for the boundary
% functions \(f\) and \(g\) \ilias{where are such functions defined ?}. The last estimate gives a quantitative control of
% \(h\) with respect to the Hausdorff distance under a uniform lower bound on the
% inradius.

In this subsection, we establish several inequalities and estimates
needed in the sequel. 

% As the Cheeger constant is considered here on open sets, and since our analysis is carried out in the class of open convex sets, we shall use the notation
% \[
% h(K):=h(\operatorname{int}K)
% \]
% for every convex body \(K\). In particular, throughout the paper, a convex body and its interior will be identified without further comment.

Let us begin with a lower bound for the inradius in terms of volume and diameter.
\begin{lemma}\label{lem:lower_r_d_A}
    Let \(K\in \mathcal{K}^m\). We have 
   $$r(K)\ge \frac{2^{m-1}}{m\,\omega_m}\cdot\frac{|K|}{D(K)^{m-1}}.$$ 
\end{lemma}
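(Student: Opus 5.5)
The plan is to chain two inequalities: an upper bound of the volume by the inradius times the perimeter, namely $|K|\le r(K)\,P(K)$, and an upper bound of the perimeter in terms of the diameter, namely $P(K)\le m\omega_m (D(K)/2)^{m-1}$. Here $\omega_m$ is the volume of the unit ball, so $m\omega_m (D/2)^{m-1}$ is the perimeter of a ball of diameter $D$. Combining them gives
\[
r(K)\ \ge\ \frac{|K|}{P(K)}\ \ge\ \frac{2^{m-1}}{m\,\omega_m}\cdot\frac{|K|}{D(K)^{m-1}},
\]
which is exactly the claim.

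\emph{Step 1: $|K|\le r(K)P(K)$.} I would use the inner parallel bodies $K_{-t}:=\{x\in K:\ \operatorname{dist}(x,\partial K)\ge t\}$ for $t\in[0,r(K)]$. Each $K_{-t}$ is convex, and $K_{-t}$ has nonempty interior only for $t<r(K)$. The function $x\mapsto \operatorname{dist}(x,\partial K)$ is $1$-Lipschitz on $K$ with gradient of norm $1$ a.e. By the coarea formula,
\[
|K|=\int_0^{r(K)}\mathcal H^{m-1}\bigl(\{\operatorname{dist}(\cdot,\partial K)=t\}\bigr)\,dt=\int_0^{r(K)}P(K_{-t})\,dt .
\]
Perimeter is monotone under inclusion of convex sets, since the nearest-point projection onto a convex set is $1$-Lipschitz. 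Because $K_{-t}\subset K$, we get $P(K_{-t})\le P(K)$, and hence $|K|\le r(K)P(K)$. The delicate point is checking that the level set $\{\operatorname{dist}=t\}$ coincides with $\partial K_{-t}$ up to $\mathcal H^{m-1}$-null sets. This holds for a.e.\ $t$, which suffices inside the integral.

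\emph{Step 2: $P(K)\le m\omega_m(D/2)^{m-1}$.} I would pass to the central symmetral $L:=\tfrac12(K-K)$. By the definition of the diameter, $K-K\subset B(0,D)$, so $L\subset B(0,D/2)$. Monotonicity of perimeter on convex sets then gives $P(L)\le m\omega_m(D/2)^{m-1}$. It remains to show $P(K)\le P(L)$. Since $mW_1=P$ and $W_1$ is Minkowski linear, $W_1(\tfrac12K+\tfrac12(-K))=\tfrac12W_1(K)+\tfrac12W_1(-K)=W_1(K)$, so in fact $P(L)=P(K)$. Alternatively, one can argue that $W_1(K)=W_1(K,B_1)$ is linear in $K$ via the Minkowski expansion \eqref{eq:minko}. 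Either way, Step 2 follows.

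I expect the main obstacle to be Step 1, specifically the rigorous justification of the layer-cake identity $|K|=\int_0^{r}P(K_{-t})\,dt$. If that becomes cumbersome, I would try a cleaner alternative. The polar decomposition $|K|=\frac1m\int_{\partial K}\langle x-x_0,\nu\rangle\,d\mathcal H^{m-1}$ does not directly give $r$. So instead I would approximate $K$ by polytopes. For a polytope $Q$, $|Q|$ decreases as facets move inward, and the derivative of $|Q_{-t}|$ in $t$ equals $-P(Q_{-t})\ge -P(Q)$. Then I would conclude by continuity of volume, perimeter and inradius under Hausdorff convergence.
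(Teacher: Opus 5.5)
Your overall chain $r(K)\ge |K|/P(K)\ge 2^{m-1}|K|/(m\omega_m D(K)^{m-1})$ is the paper's. There, $|K|\le r(K)P(K)$ is obtained from Wills' inequality $r(K)P(K)\ge |K|+(m-1)\omega_m r(K)^m$ by dropping the last term, and the perimeter--diameter bound is quoted from Gritzmann--Wills--Wrase. Your Step 1 is a sound self-contained proof of the first inequality. In fact, since $\operatorname{dist}(\cdot,\partial K)$ is concave on $K$, one has $\{\operatorname{dist}(\cdot,\partial K)=t\}=\partial K_{-t}$ for every $t\in(0,r(K))$, not only for a.e. $t$.

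The gap is in Step 2. The claim that $W_1$ is Minkowski linear, so that $P(\tfrac12(K-K))=P(K)$, is false for $m\ge 3$.
\begin{itemize}
\item $W_1(K)=W_1(K,B_1)$ is the mixed volume with $K$ in $m-1$ slots and $B_1$ in one slot. It is homogeneous of degree $m-1$ in $K$ and linear only in the slot occupied by $B_1$.
\item The Minkowski linear quermassintegral is $W_{m-1}$, which is proportional to the mean width; it coincides with $W_1$ only when $m=2$. The Minkowski expansion of $|(1-t)K+tL|$ does not give linearity in $K$ either.
\item Concretely, for a regular tetrahedron $T$ of edge $a$ in $\mathbb R^3$, the body $\tfrac12(T-T)$ is a cuboctahedron of edge $a/2$. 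Its surface area is $(6+2\sqrt3)a^2/4\approx 2.37\,a^2$, strictly larger than $P(T)=\sqrt3\,a^2\approx 1.73\,a^2$.
\end{itemize}

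What you need is only the inequality $P(\tfrac12(K-K))\ge P(K)$. It is true, but it requires an ingredient you have not supplied. One option is the Brunn--Minkowski inequality for quermassintegrals, i.e.\ concavity of $W_1^{1/(m-1)}$, which is a consequence of Aleksandrov--Fenchel. A more elementary option is Cauchy's surface area formula
\[
P(K)=\frac{1}{\omega_{m-1}}\int_{\mathbb S^{m-1}}\mathcal H^{m-1}\bigl(\operatorname{Proj}_u(K)\bigr)\,d\mathcal H^{m-1}(u).
\]
Set $A:=\operatorname{Proj}_u(K)$. Then $\operatorname{Proj}_u(\tfrac12(K-K))=\tfrac12A+\tfrac12(-A)$, and the classical Brunn--Minkowski inequality in $H(u)$ gives that its $\mathcal H^{m-1}$-measure is at least $\mathcal H^{m-1}(A)$, for every $u$. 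Integrating over $u$ yields $P(\tfrac12(K-K))\ge P(K)$.

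You can also bypass the central symmetral entirely. Each projection $\operatorname{Proj}_u(K)$ has diameter at most $D(K)$, so the isodiametric inequality in $\mathbb R^{m-1}$ bounds its measure by $\omega_{m-1}(D(K)/2)^{m-1}$. Cauchy's formula then gives $P(K)\le m\omega_m(D(K)/2)^{m-1}$ directly. With either repair your argument is complete.
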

\begin{proof}
    The inequality directly follows from combining the following Wills' inequality
    \[
r(K)\,P(K)\ge |K|+(m-1)\omega_m\,r(K)^m,
\]
see for instance \cite[\S 7.2, Eq.~(7.44)]{Schneider2014}, and the estimate 
$P(K)\le m\,\omega_m\Bigl(\frac{D(K)}{2}\Bigr)^{m-1},$
that can be found in \cite[p.~23, Eq.~(3)]{GritzmannWillsWrase1980}. 
\end{proof}

%A lower bound on the inradius in terms of the volume and the diameter follows by combining Wills' inequality with a classical perimeter--diameter estimate due to Kubota. More precisely, for every convex body \(K\subset\mathbb R^m\), Wills' inequality asserts that
%\[|K|-r(K)\,P(K)+(m-1)\omega_m\,r(K)^m\le 0,\]
%see \cite[\S 7.2, Eq.~(7.44)]{Schneider2014}. Since the last term is nonnegative, this gives \( |K|\le r(K)\,P(K) \), hence \( r(K)\ge |K|/P(K) \).

%To estimate \(P(K)\) in terms of \(D(K)\), we use Kubota's inequality,
%\[P(K)\le m\,\omega_m\Bigl(\frac{D(K)}{2}\Bigr)^{m-1},\]
%see Kubota~\cite{Kubota1930} and Gritzmann, Wills and Wrase~\cite[p.~23, Eq.~(3)]{GritzmannWillsWrase1980}. Combining the two estimates, we obtain
%\begin{equation}\label{eq:inradius-diameter}
%r(K)\ge \frac{2^{m-1}}{m\,\omega_m}\,\frac{|K|}{D(K)^{m-1}}.
%\end{equation}
%We shall use this estimate only to derive a lower bound on the inradius when the volume is fixed and the diameter is bounded above.

In the following lemma, we derive two-sided estimates for the Cheeger constant in terms of the volume and the diameter and prove the optimality of the exponents appearing in these estimates.

\begin{lemma}\label{lem:two-sided-cheeger-diameter}
Let \(K\in \mathcal{K}^m\). We have
\[
\left(\frac{\omega_{m-1}}{m}\right)^{1/(m-1)}
\left(\frac{D(K)}{|K|}\right)^{1/(m-1)}
\le
h(K)
\le
\frac{m\,\omega_m}{2^{m-1}|K|}\,D(K)^{m-1}.
\]
Moreover, the two exponents are optimal.
\end{lemma}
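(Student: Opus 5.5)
The upper bound follows from the inradius estimate, the lower bound from slicing through a diameter, and optimality from explicit families.

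\textbf{Upper bound.} For a convex body the inscribed ball is admissible in the definition of $h$. Since $h(B(x,r))=m/r$, we get $h(K)\le m/r(K)$. Inserting Lemma~\ref{lem:lower_r_d_A},
\[
r(K)\ge \frac{2^{m-1}}{m\,\omega_m}\cdot\frac{|K|}{D(K)^{m-1}},
\]
gives exactly
\[
h(K)\le \frac{m\,\omega_m}{2^{m-1}|K|}\,D(K)^{m-1}.
\]

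\textbf{Lower bound.} For any admissible $E\subset K$ I use the divergence-type bound $P(E)\ge 2\,|\pi(E)|_{m-1}$, where $\pi$ is the orthogonal projection onto a hyperplane. Since $E$ lies inside a cylinder over $\pi(E)$ of height at most $D(K)$, we also have $|E|\le D(K)\,|\pi(E)|_{m-1}$. Together these give only $P(E)/|E|\ge 2/D(K)$, which has the wrong dependence. So I will argue differently.

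Let $E$ be the (convex) Cheeger set of $K$. Then $h(K)=P(E)/|E|$, and by the standard inequality $|E|\le r(E)P(E)$ one has $h(K)\ge 1/r(E)\ge 1/r(K)$. Thus it suffices to prove an upper bound on the inradius:
\[
r(K)^{m-1}\le \frac{m\,|K|}{\omega_{m-1}D(K)}.
\]
To obtain it, pick a ball $B(c,r)\subset K$ and a segment $[p,q]\subset K$ of length $D(K)$. The convex hull of $B(c,r)$ and $\{p,q\}$ lies in $K$. Take the $(m-1)$-disc of radius $r$ through $c$ orthogonal to the direction of $pq$. The double cone over this disc with apexes $p$ and $q$ has volume $\frac{\omega_{m-1}}{m}r^{m-1}\cdot(\text{total height})$, where the height is measured along the direction of $pq$. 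That height is at least $|q-p|=D(K)$ when the projection of $c$ onto the line $pq$ lies between $p$ and $q$. In general it is at least $D(K)$ anyway, because the total height counts the distance from $p$ to the hyperplane plus the distance from $q$ to it, and the two apexes lie on opposite sides or one side with the larger distance at least $D$. I expect this bookkeeping to be the main point needing care. A cleaner fallback is Steiner symmetrization about the line $pq$, which preserves volume, keeps $[p,q]$, and still contains a disc of radius $r$ orthogonal to it. This gives $|K|\ge \frac{\omega_{m-1}}{m}r^{m-1}D(K)$, hence
\[
h(K)\ge \frac1{r(K)}\ge\Bigl(\frac{\omega_{m-1}}{m}\Bigr)^{1/(m-1)}\Bigl(\frac{D(K)}{|K|}\Bigr)^{1/(m-1)}.
\]

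\textbf{Optimality.} For the exponent $1/(m-1)$, take the double cone (or spindle) $C_L$: an $(m-1)$-disc of radius $\rho$ with two apexes at distance $L/2$, normalized to unit volume, so $\rho^{m-1}\sim L^{-1}$. Its inradius is comparable to $\rho$. The upper bound $h\le m/r$ then gives $h(C_L)\lesssim L^{1/(m-1)}$, matching the lower bound up to constants as $L\to\infty$. For the exponent $m-1$, take the flat cylinder $[0,\varepsilon]\times B^{m-1}(R)$ with $\varepsilon R^{m-1}\sim1$, so $D\sim R$. Its Cheeger constant is at least $2/\varepsilon$ by the projection argument above, since the width is $\varepsilon$: any $E$ satisfies $P(E)\ge 2|\pi(E)|$ and $|E|\le\varepsilon|\pi(E)|$. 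Hence $h\gtrsim R^{m-1}\sim D^{m-1}$. Comparing both families with the powers $D^{\alpha}$ shows that no other exponent works for all unit-volume bodies with $D\to\infty$.
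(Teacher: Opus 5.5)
\textbf{The upper bound does not come out with the stated constant.} From $h(K)\le m/r(K)$ and Lemma~\ref{lem:lower_r_d_A}, that is $1/r(K)\le m\,\omega_m D(K)^{m-1}/(2^{m-1}|K|)$, you obtain
\[
h(K)\le \frac{m^2\,\omega_m}{2^{m-1}|K|}\,D(K)^{m-1}.
\]
This is weaker than the claim by a factor $m$, so ``gives exactly'' is an arithmetic slip. The loss is built into the route, not just a matter of bookkeeping. Lemma~\ref{lem:lower_r_d_A} was itself obtained from $r(K)\ge |K|/P(K)$. Moreover, for convex bodies $|K|\ge r(K)P(K)/m$, so $P(K)/|K|\le m/r(K)$. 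Hence the inscribed ball is always a worse competitor than $K$ itself. The fix is what the paper does: test the Cheeger quotient with $K$ itself, $h(K)\le P(K)/|K|$, and insert $P(K)\le m\,\omega_m (D(K)/2)^{m-1}$ directly. This is the same perimeter--diameter bound already used inside Lemma~\ref{lem:lower_r_d_A}. The later growth estimates only need some constant, but the lemma states this one explicitly, so as written your upper bound is not proved.

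\textbf{The rest is correct, and the lower bound takes a genuinely different route.} The paper cites $h(K)\ge P(K)/(m|K|)$ and the Gritzmann--Wills--Wrase inequality $P(K)^{m-1}\ge\omega_{m-1}D(K)(m|K|)^{m-2}$. You instead do two things:
\begin{enumerate}
\item You prove $h(K)\ge 1/r(K)$ via the convex Cheeger set $E$ and $|E|\le r(E)P(E)$.
\item You prove the elementary estimate $|K|\ge \frac{\omega_{m-1}}{m}\,r(K)^{m-1}D(K)$.
\end{enumerate}
Your case distinction for the second estimate is right. If the apexes lie on opposite sides of the hyperplane through $c$, you get two cones whose heights sum to $D(K)$. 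If they lie on the same side, one cone already has height at least $D(K)$. So the Steiner-symmetrization fallback is unnecessary; it would in any case be a Schwarz symmetrization about a line, not a Steiner one.

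Both routes give the same constant. Yours is self-contained, and it passes through $h\ge 1/r$, which is stronger than $h\ge P/(m|K|)$.

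\textbf{Optimality.} Your families work as well as the paper's single cylinder family $L\,B_1^{m-1}\times[-1/2,1/2]$ taken with $L\to 0$ and $L\to\infty$. The unit-volume double cones nearly saturate your cone estimate. The flat cylinders $[0,\varepsilon]\times B^{m-1}(R)$ satisfy $h\ge 2/\varepsilon$ by the projection argument.
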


\begin{proof}

$\bullet$ The lower estimate follows from the inequality $h(K)\ge \frac1m\cdot\frac{P(K)}{|K|}$, presented in \cite[Corollary~5.2]{Brasco2018}
and the inequality $
P(K)^{m-1}
\ge
\omega_{m-1}\,D(K)\,\bigl(m|K|\bigr)^{m-2}$, see for instance \cite[p.~23, Eq.~(4)]{GritzmannWillsWrase1980}. 

$\bullet$ The upper bound is obtained by combining the following trivial estimate 
$h(K)\le \frac{P(K)}{|K|}$, with the inequality $
P(K)\le m\,\omega_m\left(\frac{D(K)}2\right)^{m-1}$,
see for instance \cite[p.~23, Eq.~(3)]{GritzmannWillsWrase1980}. 

$\bullet$ It remains to prove that the exponents in the term $D(K)$ are optimal. For \(L>0\), set
\(K_L:=L\cdot B^{m-1}_1\times [-1/2,1/2]\) the cylinder of basis the $(m-1)$-dimensional ball and altitude 1. We have 
$$|K_L|=\omega_{m-1}L^{m-1},\ D(K_L)=\sqrt{1+4L^2}\ \text{and}\  P(K_L)=2\omega_{m-1}L^{m-1}+(m-1)\omega_{m-1}L^{m-2}.$$ 
Hence, from
\(\frac{P(K)}{m|K|}\le h(K)\le \frac{P(K)}{|K|}\), we get
\[
\frac1m\left(2+\frac{m-1}{L}\right)\le h(K_L)\le
2+\frac{m-1}{L}.
\]

Let \(0<L\le1\). We have
\[
\frac{m-1}{mL}\le h(K_L)\le \frac{m+1}{L},
\]
whereas
\[
\left(\frac{D(K_L)}{|K_L|}\right)^{1/(m-1)}
=
\omega_{m-1}^{-1/(m-1)}L^{-1}
(1+4L^2)^{1/(2m-2)}.
\]
Since the last factor is bounded from above and from below by positive constants, there exist \(c,C>0\), independent of \(L\), such that
\[
\forall L\in (0,1],\ \ \ \  c\le
\frac{h(K_L)}
{\left(D(K_L)/|K_L|\right)^{1/(m-1)}}
\le C.
\]
This shows that the exponent \(1/(m-1)\) in the lower bound is optimal.

Using again \(\frac1m\left(2+\frac{m-1}{L}\right)\le h(K_L)\le 2+\frac{m-1}{L}, \) and since \(L\ge1\), we obtain 
\[
\frac{2}{m}\le h(K_L)\le m+1.
\]
Moreover,
\[
\frac{D(K_L)^{m-1}}{|K_L|}
=
\frac{(1+4L^2)^{(m-1)/2}}{\omega_{m-1}L^{m-1}}
\underset{L\rightarrow+\infty}{\longrightarrow}
\frac{2^{m-1}}{\omega_{m-1}}
.
\]
Therefore there exist \(c,C>0\), independent of \(L\), such that
\[
\forall L\ge1,\ \ \ \  c\le
\frac{h(K_L)}
{D(K_L)^{m-1}/|K_L|}
\le C.
\]
This proves the optimality of the exponent \(m-1\) in the upper bound.
\end{proof}

In the following lemma, we prove a quantitative estimate for monotonic homogeneous functionals in terms of the Hausdorff distance between two sets. 
\begin{lemma}\label{lem:cox-monotone-homogeneous}
Let $\mathcal J:\mathcal K^m\to \mathbb R$ be a positive functional which is decreasing with respect to set inclusion and homogeneous of degree $\alpha<0$. Let $K_1,K_2\in\mathcal K^m$ be such that, for some $\rho>0$,
$h_{K_1}(u),
h_{K_2}(u)\ge \rho$, for every $u\in\mathbb S^{m-1}$. Assume, moreover, that
$d_H(K_1,K_2)<\rho$. Then
\[
|\mathcal J(K_1)-\mathcal J(K_2)|
\le
C_\alpha\,\rho^{\alpha-1}\,d_H(K_1,K_2),
\]
where \(C_\alpha\) depends only on \(\alpha\).
\end{lemma}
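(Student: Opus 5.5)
The plan is to compare \(K_1\) and \(K_2\) through homotheties, using monotonicity and homogeneity of \(\mathcal J\). We place the origin as in the hypothesis, so that \(B(0,\rho)\subset K_1\cap K_2\); this is equivalent to \(h_{K_i}\ge\rho\) on \(\mathbb S^{m-1}\). Set \(\delta:=d_H(K_1,K_2)<\rho\). The key geometric step is the inclusion
\[
K_2\subset\Bigl(1+\frac{\delta}{\rho}\Bigr)K_1 .
\]
To prove it, note that for every \(u\in\mathbb S^{m-1}\),
\[
h_{K_2}(u)\le h_{K_1}(u)+\delta\le h_{K_1}(u)+\frac{\delta}{\rho}h_{K_1}(u)=\Bigl(1+\frac{\delta}{\rho}\Bigr)h_{K_1}(u),
\]
because \(h_{K_1}(u)\ge\rho\). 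Comparing support functions of convex bodies then gives the inclusion. The same argument with the roles exchanged gives \(K_1\subset(1+\delta/\rho)K_2\).

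Since \(\mathcal J\) is decreasing with respect to inclusion and \(\alpha\)-homogeneous, we obtain
\[
\mathcal J(K_2)\ge \mathcal J\bigl((1+\delta/\rho)K_1\bigr)=(1+\delta/\rho)^{\alpha}\mathcal J(K_1).
\]
Hence
\[
\mathcal J(K_1)-\mathcal J(K_2)\le \bigl(1-(1+\delta/\rho)^{\alpha}\bigr)\mathcal J(K_1),
\]
and symmetrically with \(K_1\) and \(K_2\) exchanged. Because \(\alpha<0\) and \(t:=\delta/\rho\in(0,1)\), the mean value theorem gives
\[
0\le 1-(1+t)^{\alpha}\le |\alpha|\,t .
\]
This yields
\[
|\mathcal J(K_1)-\mathcal J(K_2)|\le |\alpha|\,\frac{\delta}{\rho}\,\max\bigl(\mathcal J(K_1),\mathcal J(K_2)\bigr).
\]

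It remains to bound \(\mathcal J(K_i)\) by a multiple of \(\rho^{\alpha}\). We use \(B(0,\rho)\subset K_i\), monotonicity and homogeneity:
\[
\mathcal J(K_i)\le \mathcal J(B(0,\rho))=\rho^{\alpha}\mathcal J(B_1).
\]
Combining this with the previous estimate gives
\[
|\mathcal J(K_1)-\mathcal J(K_2)|\le |\alpha|\,\mathcal J(B_1)\,\rho^{\alpha-1}\,d_H(K_1,K_2).
\]

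The main obstacle is the constant. As written, it is \(C_\alpha=|\alpha|\mathcal J(B_1)\), so it depends on \(\mathcal J\) through \(\mathcal J(B_1)\) as well as on \(\alpha\). This is harmless for the intended application \(\mathcal J=h\) (degree \(-1\), with \(h(B_1)=m\)), where the constant depends only on \(m\). I would therefore read the statement's \(C_\alpha\) as \(|\alpha|\,\mathcal J(B_1)\) and say so explicitly. The other point to check is the convention that \(\mathcal J\) is translation invariant, which is implicit in the paper's convention that convex bodies are considered up to translations. This allows the common interior ball to be centered at the origin, which the support-function hypothesis already encodes.
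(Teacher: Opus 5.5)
Your proof is correct and follows the paper's argument. Both use the support-function comparison giving $K_2\subset(1+\delta/\rho)K_1$ and vice versa, then monotonicity plus homogeneity, then the bound $\mathcal J(K_i)\le\mathcal J(B(0,\rho))=\rho^{\alpha}\mathcal J(B_1)$. The paper's constant is likewise $C_\alpha=c_\alpha\,\mathcal J(B_1)$, so your caveat about dependence on $\mathcal J$ applies to the paper too.

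The only difference is the final elementary estimate. The paper bounds both differences by $\bigl[(1+\delta/\rho)^{-\alpha}-1\bigr]\mathcal J(K_2)$ and applies the mean value theorem to $t\mapsto t^{-\alpha}$ on $[1,2]$. That is where $d_H(K_1,K_2)<\rho$ is used, and it gives $c_\alpha=|\alpha|2^{-\alpha-1}$ when $\alpha<-1$. Your one-sided bound $1-(1+t)^{\alpha}\le|\alpha|t$ holds for all $t\ge0$, so it gives the better constant $|\alpha|\,\mathcal J(B_1)$ and shows the hypothesis $d_H(K_1,K_2)<\rho$ is unnecessary.
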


\begin{proof}
We recall that the Hausdorff distance between convex bodies is given the uniform distance between their support functions, i.e.,
$d_H(K_1,K_2)=\|h_{K_1}-h_{K_2}\|_\infty$.

The lower bounds \(h_{K_1},h_{K_2}\ge \rho\) imply 
%\ilias{Je ne comprends pas cette phrase, de quelle lowerbound tu parle ? fais des references précises}
\[
K_1\subset
\left(1+\frac{d_H(K_1,K_2)}{\rho}\right)K_2
\ \ \text{and}\ \ 
K_2\subset
\left(1+\frac{d_H(K_1,K_2)}{\rho}\right)K_1.
\]
Indeed, for every $u\in\mathbb S^{m-1}$,
\[
h_{K_1}(u)
\le
h_{K_2}(u)+d_H(K_1,K_2)
\le
\left(1+\frac{d_H(K_1,K_2)}{\rho}\right)h_{K_2}(u),
\]
and the reverse inclusion follows by symmetry.

By the monotonicity and the homogeneity of $\mathcal J$, these inclusions yield
\[
\left(1+\frac{d_H(K_1,K_2)}{\rho}\right)^{\alpha}
\mathcal J(K_2)
\le
\mathcal J(K_1)
\le
\left(1+\frac{d_H(K_1,K_2)}{\rho}\right)^{-\alpha}
\mathcal J(K_2).
\]
Consequently,
\[
|\mathcal J(K_1)-\mathcal J(K_2)|
\le
\left[
\left(1+\frac{d_H(K_1,K_2)}{\rho}\right)^{-\alpha}-1
\right]\mathcal J(K_2).
\]

Since $d_H(K_1,K_2)<\rho$, the quantity $1+d_H(K_1,K_2)/\rho$
belongs to the interval $[1,2]$. Applying the mean value theorem to the function
$t\mapsto t^{-\alpha}$ on this interval gives
\[
\left(1+\frac{d_H(K_1,K_2)}{\rho}\right)^{-\alpha}-1
\le
c_\alpha\,\frac{d_H(K_1,K_2)}{\rho},
\]
where
$
c_\alpha
=
\sup_{1\le t\le 2}(-\alpha)t^{-\alpha-1}$.

It is straightforward to check that
\[
c_\alpha
=
\begin{cases}
-\alpha, & \text{if}\ \ -1\le \alpha<0,\\[2mm]
(-\alpha)\,2^{-\alpha-1}, & \text{if}\ \  \alpha<-1.
\end{cases}
\]

Finally, from $h_{K_2}\ge \rho$, we obtain the inclusion $B(0,\rho)\subset K_2$. Since $\mathcal J$ is decreasing with respect to inclusion, we have
$
\mathcal J(K_2)\le \mathcal J(B(0,\rho))$. Finally, combining the previous estimates and using the homogeneity of $\mathcal{J}$, and letting $C_\alpha:=c_\alpha\,\mathcal J(B_1)$, we obtain 
\[
|\mathcal J(K_1)-\mathcal J(K_2)|
\le
c_\alpha\,\mathcal J(B_1)\,\rho^{\alpha-1}\,d_H(K_1,K_2)=C_\alpha \rho^{\alpha-1} d_H(K_1,K_2).
\]

\end{proof}

\begin{remark}
We point out a refinement of Lemma~\ref{lem:cox-monotone-homogeneous} and its
application to the Cheeger constant.

\begin{enumerate}
\item In the case \(\alpha=-1\), the assumption $d_H(K_1,K_2)<\rho$
is not needed. Indeed, in this case the map \(t\mapsto t^{-\alpha}\) is linear, and the mean value estimate used in the proof is exact.

\item Applying Lemma~\ref{lem:cox-monotone-homogeneous} to the Cheeger constant, which is decreasing with respect to set inclusion and homogeneous of degree \(-1\), gives
\[
|h(K_1)-h(K_2)|
\le
\frac{m}{\rho^{2}}\,d_H(K_1,K_2).
\]
\end{enumerate}
\end{remark}

%========================================================
\subsection{Continuous Steiner symmetrization and perturbation results}
%\subsection{Symmetrization and local extremal properties} 
%\ilias{J'aime pas beaucoup ce titre "local extremal properties"}
%========================================================

% In this subsection, we recall \ilias{je dirais plutôt "we recall"} continuous Steiner symmetrization for convex
% bodies and use it to prove """a strict decrease of the Cheeger constant in a
% suitable direction""" \ilias{that one can find a local perturbation that strictly decreases the Cheeger constant of convex body while preserving its convexity and volume}. We then establish the """local extremal properties""" \ilias{ce terme n'a pas de sense} of the
% diameter and the Cheeger constant on \(\mathcal K_1^m\), which will be used
% in the study of the boundary functions \ilias{what are "boundary functions"? N'oublies pas que tu n'écris pas pour toi-même, tu écris pour des lecteurs qui ne connaissent pas ton travail. Il faut donc les aider en gardant une façon la plus générale de présenter. Donc à ta place, moi je dirai simplement: ""the study of the boundary of the diragram D, defined in ..., et je mets une reference à la definition du diagramme "" }.

%\subsection{Continuous Steiner symmetrization}
In this subsection, we recall continuous Steiner symmetrization for convex
bodies and use it to prove that one can find a local perturbation that strictly
decreases the Cheeger constant of a convex body while preserving its convexity
and volume. We then establish the corresponding perturbation properties of the
diameter and the Cheeger constant on \(\mathcal K_1^m\), which will be used
in the study of the boundary of the diagram \(\mathcal D_m\), defined in
\eqref{eq:def-convex-diagram}.

We begin by recalling the definition of continuous Steiner symmetrization and
the properties used below: preservation of convexity and volume, monotonicity
with respect to set inclusion, and continuity in the Hausdorff metric.
Fix \(u\in\mathbb S^{m-1}\), and let
$H(u):=\{x\in\mathbb R^m:\langle x,u\rangle=0\}$.

For \(K\in\mathcal K^m\), we denote by \(\operatorname{Proj}_u(K)\) the orthogonal projection of \(K\) onto \(H(u)\). For every \(x\in \operatorname{Proj}_u(K)\), the section of \(K\) along the line \(x+\mathbb Ru\) is a compact interval, which we write as
$
K_x=[a(x),b(x)]$.

The Steiner symmetrized of \(K\) with respect to \(H(u)\) is then given by
\[
S_u(K)
=
\Bigl\{
x+su:\ x\in \operatorname{Proj}_u(K),\ |s|\le \frac{b(x)-a(x)}{2}
\Bigr\}.
\]

For \(t\in[0,1]\), we define
\[
S_u^t(K)
=
\Bigl\{
x+su:\ x\in \operatorname{Proj}_u(K),\
(1-t)a(x)+\frac t2\bigl(a(x)-b(x)\bigr)\le s\le
(1-t)b(x)+\frac t2\bigl(b(x)-a(x)\bigr)
\Bigr\}.
\]
Thus \(S_u^0(K)=K\) and \(S_u^1(K)=S_u(K)\).

The family \(\bigl(S_u^t\bigr)_{t\in[0,1]}\) corresponds to Brock's continuous Steiner symmetrization, see \cite{Brock95}, up to a reparametrization of the time variable. We shall use the following standard properties: it preserves the convexity and the volume, it is monotone with respect to set inclusion, and it is continuous with respect to the Hausdorff metric. In particular, if \(K\subset L\), then
\[
S_u^t(K)\subset S_u^t(L)
\qquad\text{for every }t\in[0,1].
\]

We refer to Brock \cite[Theorem~1, relation~(8), and Theorem~4]{Brock95}. In the sequel, we write
\(
K_t:=S_u^t(K).
\)
After a rotation sending \(u\) to \(e_m\), we shall use the representation
\[
K=\{(x',y):x'\in A,\ a(x')<y<b(x')\},
\]
where \(A\subset\mathbb R^{m-1}\) is convex.
%\subsection{Perimeter decay under continuous Steiner symmetrization}

Before stating an important perturbation lemma for the Cheeger constant, let us prove an intermediate result for the perimeter that will be used in the sequel.
\begin{lemma}[Strict decay of the perimeter under continuous Steiner symmetrization for \(C^1\) bodies]
\label{lem:css-perimeter-decay}
Let \(K\in\mathcal K^m\) be a convex body with \(C^1\) boundary, and let \(u\in\mathbb S^{m-1}\). Assume that \(K\) is not symmetric with respect to the hyperplane orthogonal to \(u\). For \(t\in[0,1]\), set
\(
K_t:=S_u^t(K).
\)
Then
\[
P(K_t)<P(K)
\qquad\text{for every }t\in(0,1].
\]

\end{lemma}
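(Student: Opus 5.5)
We work in the representation \(K=\{(x',y):x'\in A,\ a(x')\le y\le b(x')\}\), with \(A=\operatorname{Proj}_u(K)\) convex, \(a\) convex and \(b\) concave on \(A\). Writing \(\ell:=b-a\) for the length of the section and \(c:=(a+b)/2\) for its midpoint, the definition of \(S_u^t\) gives
\[
a_t=c_t-\tfrac{\ell}{2},\qquad b_t=c_t+\tfrac{\ell}{2},\qquad c_t:=(1-t)c .
\]
Thus continuous Steiner symmetrization keeps the lengths fixed and shrinks the midpoint function linearly to \(0\).

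\textbf{Perimeter formula.} Since \(K\) is \(C^1\), the boundary splits into the graphs of \(a\) and \(b\) over \(\operatorname{int}A\) and a vertical part over \(\partial A\). The vertical part has measure \(\int_{\partial A}\ell\,d\mathcal H^{m-2}\), which is the same for all \(t\). We then write
\[
P(K_t)=\int_A\Bigl(\sqrt{1+|\nabla c_t-\tfrac12\nabla\ell|^2}+\sqrt{1+|\nabla c_t+\tfrac12\nabla\ell|^2}\Bigr)\,dx'+\int_{\partial A}\ell .
\]
This needs some care: \(a\) and \(b\) are locally Lipschitz in \(\operatorname{int}A\), and the area formula applies to the graphs. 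Near \(\partial A\) the gradients may blow up, but the integrals still equal the areas of the graphs, which are finite because \(K_t\) is convex.

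\textbf{Convexity in \(t\).} Set \(F(p):=\sqrt{1+|p|^2}\), which is strictly convex on \(\mathbb R^{m-1}\). For each fixed \(x'\), the map \(t\mapsto F(\nabla c_t\mp\frac12\nabla\ell)\) is convex, because \(\nabla c_t=(1-t)\nabla c\) is affine in \(t\). Hence \(\phi(t):=P(K_t)\) is convex on \([0,1]\).

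\textbf{Comparison of the endpoints.} The integrand satisfies \(F(q-p)+F(q+p)\ge 2F(p)\), with equality iff \(q=0\), by strict convexity. Applied with \(q=\nabla c\) and \(p=\frac12\nabla\ell\), this gives \(\phi(1)\le\phi(0)\), with equality iff \(\nabla c=0\) a.e. on \(A\). Equality would make \(c\) constant on the connected set \(\operatorname{int}A\). Since \(K\) is considered up to translation along \(u\), this constant is \(0\), so \(K\) would be symmetric with respect to \(H(u)\), contradicting the hypothesis. Therefore \(\phi(1)<\phi(0)\).

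\textbf{Conclusion.} By convexity of \(\phi\), for \(t\in(0,1]\),
\[
\phi(t)\le(1-t)\phi(0)+t\phi(1)<\phi(0),
\]
which is exactly \(P(K_t)<P(K)\).

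\textbf{Expected difficulty.} The main obstacle is making the perimeter formula rigorous up to \(\partial A\), where the gradients of \(a\) and \(b\) blow up. I would handle it by exhausting \(A\) with compact convex subsets \(A_\varepsilon\) and applying the integral convexity and comparison on each \(A_\varepsilon\). Then I would pass to the limit by monotone convergence, using that the contribution of the vertical part does not depend on \(t\).

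The hypothesis that \(\partial K\) is \(C^1\) ensures the boundary is exactly these two graphs plus the vertical cylinder over \(\partial A\), with no extra flat pieces. It also ensures the vertical contribution of \(K_t\) is the same \(\int_{\partial A}\ell\), since \(K_t\) has the same \(A\) and the same \(\ell\). The translation normalization along \(u\) is the one implicit in the symmetry statement, namely symmetry with respect to some hyperplane orthogonal to \(u\).
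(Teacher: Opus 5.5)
Your proof is correct and rests on the same mechanism as the paper's: strict convexity and evenness of $F(p)=\sqrt{1+|p|^2}$, applied pointwise to the gradients of the lower and upper graph functions, with equality forcing the midpoint function $(a+b)/2$ to be constant, i.e.\ symmetry about some hyperplane orthogonal to $u$. The differences are organizational. The paper compares $K_t$ with $K$ directly for each $t$ through $\nabla a_t=(1-\tfrac t2)\nabla a+\tfrac t2(-\nabla b)$ and uses the $C^1$ assumption to get a continuous integrand, whereas you pass through convexity of $t\mapsto P(K_t)$ and strictness at $t=1$ with an a.e.\ argument. Your route also makes explicit the $t$-independent vertical part over $\partial A$ and the gradient blow-up near $\partial A$, which the paper passes over. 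Contrary to your closing remark, it never uses the $C^1$ hypothesis, since the decomposition of $\partial K$ into two graphs plus the vertical part over $\partial A$ holds for every convex body.
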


\begin{proof}
We may assume  without loss of generality that \(u=e_m\). We write
\[
K=\{(x',y):x'\in A,\ a(x')<y<b(x')\},
\]
with \(A\subset\mathbb R^{m-1}\) convex and \(a,b\in C^1(A)\) because the boundary of \(K\) is \(C^1\). For \(t\in[0,1]\), the continuous Steiner symmetrization is of the form
\[
K_t=\{(x',y):x'\in A,\ a_t(x')<y<b_t(x')\},
\]
where
$a_t=(1-t/2)a- (t/2)b$ and $
b_t=(1-t/2)b- (t/2)a$.

Hence \(b_t-a_t=b-a\). By the area formula applied to the upper and lower graphs, it remains to compare the corresponding graph terms.

Set \(F(z):=\sqrt{1+|z|^2}\). Since \(F\) is even and strictly convex, we have
\begin{equation}\label{eq:convexity-at}
F(\nabla a_t)
\le
\Bigl(1-\frac t2\Bigr)F(\nabla a)
+
\frac t2 F(\nabla b).
\end{equation}

\begin{equation}\label{eq:convexity-bt}
F(\nabla b_t)
\le
\Bigl(1-\frac t2\Bigr)F(\nabla b)
+
\frac t2 F(\nabla a).
\end{equation}
Summing,
\[
F(\nabla a_t)+F(\nabla b_t)\le F(\nabla a)+F(\nabla b)\quad\text{on }A.
\]
Integrating over \(A\), we obtain
\[
P(K_t)\le P(K)\qquad\text{for every }t\in[0,1].
\]

Assume now that \(P(K_t)=P(K)\) for some \(t\in(0,1]\). Then
$G:=\bigl(F(\nabla a)+F(\nabla b)\bigr)-\bigl(F(\nabla a_t)+F(\nabla b_t)\bigr)$
is a continuous nonnegative function on \(A\) with zero integral, hence \(G\equiv 0\) on \(A\). Since the inequality \(G\ge 0\) is obtained by summing \eqref{eq:convexity-at} and \eqref{eq:convexity-bt} that are inequalities holding in the same direction, the pointwise equality $G=0$ on $A$ forces pointwise equalities in each of \eqref{eq:convexity-at} and \eqref{eq:convexity-bt}. Since both coefficients \(1-\tfrac t2\) and \(\tfrac t2\) are strictly positive and \(F\) is strictly convex, this implies $
\nabla a=-\nabla b\quad\text{on }A$.

Therefore, \(\nabla(a+b)=0\) on \(A\), and since \(A\) is connected, \(a+b\) is constant. It follows that all sections of \(K\) parallel to \(u\) have the same midpoint, so \(K\) is symmetric with respect to a hyperplane parallel to \(H(u)\), hence, up to translation, with respect to \(H(u)\).

In particular, if \(K\) is not symmetric with respect to \(H(u)\), then
$P(K_t)<P(K)\ \text{for every }t\in(0,1].$
\end{proof}
%\subsection{Cheeger decay under continuous Steiner symmetrization}
%\ilias{"Cheeger decay" n'a pas de sense, tu parle de la constante de Cheeger plutot}

We apply Lemma~\ref{lem:css-perimeter-decay} to the Cheeger set of a convex
body and obtain a strict decrease of the Cheeger constant along a suitable
continuous Steiner symmetrization whenever the body is not a Euclidean ball.
\begin{lemma}\label{lem:css-cheeger-decay}
Let \(K\in\mathcal K_1^m\) be a convex body that is not a Euclidean ball. Then there exists a direction \(u\in\mathbb S^{m-1}\) such that, setting
$K_t:=S_u^t(K)$, we have
\[
h(K_t)<h(K)
\qquad\text{for every}\ t\in(0,1].
\]
\end{lemma}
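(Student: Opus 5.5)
Let \(C\subset K\) be the Cheeger set of \(K\). It is unique and convex by \cite{AlterCaselles2009,CasellesChambolleNovaga2010}, and \(h(K)=P(C)/|C|\). Two further facts from the same literature are used: \(C\) has \(C^{1,1}\) boundary, in particular \(C^1\), and \(C\) is not a ball unless \(K\) is a ball. For the second fact, recall that \(C\) is the union of all balls of radius \(1/h(K)\) contained in \(K\). If \(C\) were a ball of radius \(\rho\), every such ball would equal \(C\), so \(\rho=1/h(K)=r(K)\) and \(K\) would have a unique inscribed ball. Then \(h(K)=P(C)/|C|=m/r(K)\). On the other hand, \(C=K\setminus(\partial K)_{\text{inner}}\) must satisfy \(\partial C\cap\partial K\ne\emptyset\) with mean curvature \(h(K)/(m-1)\) on the free part. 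Equivalently, use \(h(K)\ge P(K)/(m|K|)\) together with \(P(K)/|K|\ge m/r(K)\); the latter holds with equality only for tangential bodies, and tangential bodies other than the ball have Cheeger set larger than the inball. I would cite these structural facts rather than reprove them.

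\textbf{Step 1.} Since \(C\) is a convex body with \(C^1\) boundary that is not a ball, it is not symmetric with respect to hyperplanes orthogonal to every direction. Indeed, a convex body symmetric with respect to hyperplanes orthogonal to all directions (after suitable translations) is a ball; more simply, if for every \(u\) some translate hyperplane orthogonal to \(u\) is a symmetry hyperplane, these hyperplanes all pass through the centroid, and the full reflection group forces rotational invariance. So pick \(u\in\mathbb S^{m-1}\) such that \(C\) is not symmetric with respect to any hyperplane orthogonal to \(u\). This is exactly the hypothesis used at the end of the proof of Lemma~\ref{lem:css-perimeter-decay}, which in fact concludes non-symmetry with respect to any parallel hyperplane. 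Lemma~\ref{lem:css-perimeter-decay} then gives \(P(S_u^t(C))<P(C)\) for every \(t\in(0,1]\).

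\textbf{Step 2.} By monotonicity of continuous Steiner symmetrization, \(C\subset K\) implies \(C_t:=S_u^t(C)\subset S_u^t(K)=K_t\). Volume is preserved, so \(|C_t|=|C|>0\). Hence \(C_t\) is admissible in the definition of \(h(K_t)\), and
\[
h(K_t)\le \frac{P(C_t)}{|C_t|}<\frac{P(C)}{|C|}=h(K)\qquad\text{for all } t\in(0,1].
\]

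\textbf{Main obstacle.} The delicate point is ensuring that the Cheeger set is not a ball when \(K\) is not a ball, so that a non-symmetric direction exists. I would try first the characterization of the Cheeger set as the union of balls of radius \(1/h(K)\) inside \(K\). If \(C=B(x,\rho)\), then \(\rho=1/h(K)\) and \(B(x,\rho)\) is the only such ball, so \(\rho=r(K)\) and \(h(K)=m/r(K)\). Combined with \(h(K)\ge P(K)/(m|K|)\), this gives \(P(K)\le m|K|/r(K)\). Together with the general bound \(|K|\le r(K)P(K)/m\) (from the inner-parallel-body/cone decomposition), we get \(P(K)r(K)=m|K|\). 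Equality in the cone decomposition forces every support hyperplane of \(K\) to be tangent to the inball, so \(K\) is tangential with inball \(C\). For such \(K\ne\) ball, a small ball of radius \(\rho\) can be translated slightly toward a vertex while staying inside \(K\). This would contradict uniqueness, unless no inclusion of a radius-\(r(K)\) ball into \(K\) other than \(C\) exists, which holds; instead, one uses the criterion that \(C\) equals the inball iff the Cheeger equation is violated: the union \(C\cup\) thin neighborhood toward a corner lowers \(P/|\cdot|\) since \(P(C)/|C|=m/\rho\) equals the mean-curvature threshold. I expect this last step to require citing the known fact (e.g.\ \cite{Leonardi2015}) that \(B_{r(K)}\) is the Cheeger set only for balls.
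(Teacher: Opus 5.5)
Your argument is correct and follows the paper's proof. You take the unique, convex, $C^{1,1}$ Cheeger set $C$, choose $u$ so that $C$ is not symmetric about any hyperplane orthogonal to $u$, and conclude from $S_u^t(C)\subset K_t$, $|S_u^t(C)|=|C|$ and $P(S_u^t(C))<P(C)$. Your choice of $u$ is exactly what the equality case of Lemma~\ref{lem:css-perimeter-decay} needs, and is slightly more careful than the paper's, which only excludes symmetry about $H(u)$ itself.

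You should discard your side justifications that $C$ is not a ball. The union-of-balls description of Cheeger sets holds only in the plane, and the cone decomposition gives $|K|\ge r(K)P(K)/m$, the reverse of what you wrote. The fact can instead be cited, as the paper does, or proved directly. A ball Cheeger set must be an inball $B_r$ with $h(K)=m/r$. For $p\in K\setminus B_r$, the cap body $E:=\operatorname{conv}(B_r\cup\{p\})\subset K$ has its support hyperplane tangent to $B_r$ at $\mathcal H^{m-1}$-a.e. boundary point. Hence $|E|=rP(E)/m$, so $E$ is a second Cheeger set, contradicting uniqueness.
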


\begin{proof}
Let \(C_K\subset K\) denote the Cheeger set of \(K\). Since \(K\) is convex, \(C_K\) is unique, convex, and of class \(C^{1,1}\); see, for instance, \cite{Leonardi2015}. Moreover, by \cite{FigalliMaggiPratelli2010}, a ball is the Cheeger set of a convex body if and only if it is a ball. Therefore, since \(K\) is not a Euclidean ball, its Cheeger set \(C_K\) is also not a ball.

Let \(u\in\mathbb S^{m-1}\) be such that \(C_K\) is not symmetric with respect to \(H(u)\). We claim that \(K\) is not symmetric with respect to \(H(u)\). Indeed, if \(K\) were symmetric with respect to \(H(u)\), then the reflection of \(C_K\) across \(H(u)\) would also be a Cheeger set of \(K\). By the uniqueness of the Cheeger set in convex bodies, this reflection must coincide with \(C_K\). Hence \(C_K\) would be symmetric with respect to \(H(u)\), contradicting the choice of \(u\).

For \(t\in[0,1]\), set \(K_t:=S_u^t(K)\) and \(C_t:=S_u^t(C_K)\). Since continuous Steiner symmetrization preserves convexity and volume, one has \(K_t\in\mathcal K_1^m\) for every \(t\in[0,1]\). Moreover, monotonicity with respect to inclusion gives \(C_t\subset K_t\) for every \(t\in[0,1]\).

Since \(C_K\) is not symmetric with respect to \(H(u)\), Lemma~\ref{lem:css-perimeter-decay} yields \(P(C_t)<P(C_K)\) for every \(t\in(0,1]\). On the other hand, continuous Steiner symmetrization preserves volume, so \(|C_t|=|C_K|\) for every \(t\in[0,1]\). Since \(C_t\subset K_t\), the set \(C_t\) is an admissible competitor for the Cheeger problem in \(K_t\). Therefore, for every \(t\in(0,1]\),
\[
h(K_t)\le \frac{P(C_t)}{|C_t|}<\frac{P(C_K)}{|C_K|}=h(K).
\]
This proves the claim.
\end{proof}

\subsection{Perturbation lemmas for the diameter and the Cheeger constant}
We combine the preceding symmetrization result with volume-normalized
perturbations to derive the local extremal properties of the diameter and the
Cheeger constant on \(\mathcal K_1^m\).
\begin{lemma}[Local extremal properties of \(D\) and \(h\)]\label{lem:local-extrema}
Let \(B\) be the Euclidean ball of volume \(1\). Then the following statements hold.
\begin{enumerate}
\item The ball \(B\) is the unique local minimizer of the diameter \(D\) on \(\mathcal K_1^m\).
\item The ball \(B\) is the unique local minimizer of the Cheeger constant \(h\) on \(\mathcal K_1^m\).
\item The diameter \(D\) admits no local maximizer on \(\mathcal K_1^m\).
\item In dimension \(2\), the Cheeger constant \(h\) admits no local maximizer on \(\mathcal K_1^2\).
\end{enumerate}
\end{lemma}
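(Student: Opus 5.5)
We treat the four statements separately; in each case we either use a global inequality or exhibit an explicit perturbation in \(\mathcal K_1^m\) that converges to \(K\) in the Hausdorff metric.

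\textbf{Statement 1.} By the isodiametric inequality, \(D(K)\ge D(B)\) for every \(K\in\mathcal K_1^m\), with equality only for balls, so \(B\) is a global, hence local, minimizer. For uniqueness, let \(K\in\mathcal K_1^m\) not be a ball. We build volume-one convex bodies arbitrarily close to \(K\) with strictly smaller diameter. Take \(K_t:=(1-t)K+tB\) and renormalize, \(\widetilde K_t:=K_t/|K_t|^{1/m}\). The map \(t\mapsto|K_t|^{1/m}\) is concave by Brunn--Minkowski, and it is affine only when \(K\) and \(B\) are homothetic. Since \(K\) is not a ball, for \(t\in(0,1]\) we have
\[
|K_t|^{1/m}>(1-t)|K|^{1/m}+t|B|^{1/m}=1 .
\]
The diameter is subadditive under Minkowski combination, so \(D(K_t)\le(1-t)D(K)+tD(B)<D(K)\), because \(D(B)<D(K)\) by the isodiametric inequality. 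Hence
\[
D(\widetilde K_t)<D(K),
\]
and \(\widetilde K_t\to K\) as \(t\to0\) by continuity of the volume. Therefore \(K\) is not a local minimizer.

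\textbf{Statement 2.} The Faber--Krahn inequality for the Cheeger constant gives \(h(K)\ge h(B)\), with equality only for balls, so \(B\) is a global minimizer. For uniqueness, apply Lemma~\ref{lem:css-cheeger-decay}. It gives a direction \(u\) such that \(K_t:=S_u^t(K)\in\mathcal K_1^m\) and \(h(K_t)<h(K)\) for every \(t\in(0,1]\). By the continuity of Brock's symmetrization in the Hausdorff metric, \(K_t\to K\) as \(t\to0\). Hence \(K\) is not a local minimizer.

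\textbf{Statement 3.} Fix \(K\in\mathcal K_1^m\), let \(p,q\in K\) realize the diameter, and set \(e:=(q-p)/|q-p|\). Stretch \(K\) along \(e\) by the volume-preserving linear map
\[
T_\varepsilon:=(1+\varepsilon)\,e\otimes e+(1+\varepsilon)^{-1/(m-1)}(I-e\otimes e).
\]
Then
\[
D(T_\varepsilon K)\ge|T_\varepsilon(q-p)|=(1+\varepsilon)D(K)>D(K),
\]
and \(T_\varepsilon K\to K\) as \(\varepsilon\to0\). So no \(K\) is a local maximizer.

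\textbf{Statement 4.} This is the delicate step, and the reason the statement is restricted to \(m=2\). The plan is to use the Kawohl--Lachand-Robert characterization: the Cheeger set of a planar convex body is \(C_K=K_r+rB_1\), where \(K_r\) is the inner parallel set and \(r=1/h(K)\) solves \(|K_r|=\pi r^2\). Given \(K\), we would cut off a small cap of \(K\) by a line near a boundary point \(x_0\in\partial K\setminus\partial C_K\); such a point exists because \(K\) is not a ball, so \(C_K\neq K\). Cutting leaves \(C_K\) inside the new body, and since \(C_K\) still minimizes the Cheeger ratio among subsets there, the Cheeger constant does not decrease, while the volume drops. Rescaling back to volume one multiplies \(h\) by \(|K'|^{1/2}>1\)... wait: rescaling by \(\lambda=|K'|^{-1/2}>1\) gives \(h(\lambda K')=h(K')/\lambda\), which is strictly smaller. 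So the cut must be made where it does not hit \(C_K\) only if we instead want to decrease \(h\); to \emph{increase} \(h\) we need the cut to remove part of \(C_K\). We therefore plan to cut a small cap meeting \(\partial C_K\) on a free arc of radius \(r\), where the arcs of \(\partial C_K\) lie in the interior of \(K\). Using the explicit formula for \(r\) via the inner parallel set, we would show that the new radius decreases at first order in the cap size, faster than the volume-rescaling factor compensates. This first-order comparison through the Kawohl--Lachand-Robert formula is where we expect the main work to lie.
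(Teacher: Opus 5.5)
Your assertions 1--3 are correct. For 1 you interpolate $K_t=(1-t)K+tB$ with the ball, whereas the paper uses outer parallel bodies $K+tB_1$ and a first-order Steiner expansion. Your route is more elementary and works for every $t\in(0,1]$. You only need the non-strict bound $|K_t|^{1/m}\ge 1$; the strict Brunn--Minkowski inequality you state actually fails at $t=1$. The strict decrease already comes from $D(K_t)\le (1-t)D(K)+tD(B)<D(K)$. For 3, your volume-preserving stretch along a diametral direction is cleaner than the paper's cap cutting, which must ensure the cutting hyperplane avoids the diametral pair. Assertion 2 is exactly the paper's argument via Lemma~\ref{lem:css-cheeger-decay}.

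The genuine gap is assertion 4, which you do not prove. You give a plan whose decisive step is explicitly left open: a first-order comparison showing $h(K')\,|K'|^{1/2}>h(K)$ for a suitable small cap-cut $K'$. The plan also rests on a false claim.

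\begin{itemize}
\item It is not true that a planar convex body which is not a disk has $C_K\neq K$. A stadium $K=[0,L]e_1+\rho B_1$ is its own Cheeger set. Its inner parallel sets are $[0,L]e_1+(\rho-t)B_1$, the Kawohl--Lachand-Robert radius $r$ lies in $(0,\rho)$, and $K_r+rB_1=K$. More generally, every planar convex body with $C^{1,1}$ boundary whose curvature does not exceed $P(K)/|K|$ is self-Cheeger.
\item For such bodies there is neither a point of $\partial K\setminus\partial C_K$ nor a free arc of $\partial C_K$ in the interior of $K$. Your cap perturbation therefore has nothing to act on, and your argument says nothing about them.
\item Where free arcs do exist, a cap meeting $C_K$ produces two competing effects. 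Clipping $C_K$ raises $h(K')$, while renormalizing to unit area multiplies by $|K'|^{1/2}<1$. Your sketch gives no indication of which effect wins, and that sign is the whole content of the claim.
\end{itemize}

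The paper does not prove assertion 4 from scratch either. It invokes \cite[Proposition~4.10]{Ftouhi2025}, a perturbation argument built on the Kawohl--Lachand-Robert characterization. To close the gap you need either that result or a complete argument covering all planar convex bodies, including the self-Cheeger ones.
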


\begin{proof}
\begin{enumerate}
    \item Let \(K\in\mathcal K_1^m\) . For \(t\ge 0\), we set $K_t:=K+tB_1$ and
$\widetilde K_t:=|K_t|^{-1/m}K_t \in\mathcal K_1^m$. Since \(h_{K_t}(u)=h_K(u)+t\), we have 
\[
D(K_t)
=
\max_{u\in\mathbb S^{m-1}}
\bigl(h_{K_t}(u)+h_{K_t}(-u)\bigr)
=
\max_{u\in\mathbb S^{m-1}}
\bigl(h_K(u)+h_K(-u)+2t\bigr)
=
D(K)+2t.
\]

On the other hand, by Steiner's formula \eqref{eq:steiner}
\[
|K_t|
=
|K+tB_1|
=
\sum_{k=0}^m \binom{m}{k}W_k(K)t^k
=
1+mW_1(K)t+o(t).
\]
Hence
\[
D(\widetilde K_t)
=
|K_t|^{-1/m}D(K_t)
=
\bigl(1-W_1(K)t+o(t)\bigr)\bigl(D(K)+2t\bigr),
\]
so that
\[
D(\widetilde K_t)
=
D(K)+\bigl(2-W_1(K)D(K)\bigr)t+o(t).
\]

By the isoperimetric inequality, using \(mW_1(K)=P(K)\), and isodiametric inequality, we can write
$W_1(K)D(K)\ge 2$, where the equality holds if and only if \(K\) is a Euclidean ball. If \(K\) is not a ball, the inequality is strict; hence the above expansion gives \(D(\widetilde K_t)<D(K)\) for all sufficiently small \(t>0\). Hence the only local minimizer of the diameter on \(\mathcal K_1^m\) is the  Euclidean ball.

\item Let \(K\in\mathcal K_1^m\) be a local minimizer of the Cheeger constant \(h\) on \(\mathcal K_1^m\). We claim that \(K\) must be a Euclidean ball.

Assume by contradiction that \(K\) is not a ball. By Lemma~\ref{lem:css-cheeger-decay}, there exists a direction \(u\in\mathbb S^{m-1}\) such that, setting \(K_t:=S_u^t(K)\), one has \(h(K_t)<h(K)\) for every \(t\in(0,1]\). Since continuous Steiner symmetrization preserves convexity and volume, one has \(K_t\in\mathcal K_1^m\) for every \(t\in[0,1]\). Moreover, \(K_t\underset{t\rightarrow 0^+}{\longrightarrow} K\) in the Hausdorff metric; see \cite{Brock95}. Hence, for arbitrarily small \(t>0\), the body \(K_t\) lies arbitrarily close to \(K\) in \(\mathcal K_1^m\), while satisfying \(h(K_t)<h(K)\). This contradicts the local minimality of \(K\).
Therefore, balls are the only local minimizer of \(h\) on \(\mathcal K_1^m\).

\item Let \(K\in\mathcal K_1^m\). We show that \(K\) cannot be a local maximizer of the diameter. 

Choose \(x^+,x^-\in K\) such that
$
|x^+-x^-|=D(K)$ and let \(z\in\partial K\setminus\{x^+,x^-\}\), and let \(H\) be a supporting hyperplane to \(K\) at \(z\). For \(\delta>0\) sufficiently small, let \(H_\delta^{-}\) be the closed half-space obtained by translating \(H\) slightly towards the interior of \(K\), and set $K_\delta:=K\cap H_\delta^{-}$.

By construction, \(K_\delta\subset K\), \(K_\delta\underset{\delta\rightarrow0^+}{\longrightarrow} K\) in the Hausdorff metric, and \(|K_\delta|<1\). Moreover, for \(\delta\) small enough, the chosen diametral pair \(x^\pm\) still belongs to \(K_\delta\), and therefore \(D(K_\delta)=D(K)\). Defining
$\widetilde K_\delta:=|K_\delta|^{-1/m}K_\delta$,
one has \(\widetilde K_\delta\in\mathcal K_1^m\), \(\widetilde K_\delta\underset{\delta\rightarrow 0^+}{\longrightarrow} K\) in the Hausdorff metric, and
\[
\forall\,\delta>0,\qquad
D(\widetilde K_\delta)
=
|K_\delta|^{-1/m}D(K)
>
D(K).
\]
since \(|K_\delta|<1\). This excludes the existence of a local maximizer of \(D\) on \(\mathcal K_1^m\).

\item The fourth assertion (known only for the planar case) is proved in \cite[Proposition 4.10]{Ftouhi2025}. 
%The argument rests on \cite[Theorem~1.2]{Ftouhi2025} where non-tangential bodies are ruled out by the strict monotonicity of the scaling-invariant Cheeger constant of the parallel bodies of \(K\), whereas tangential bodies are excluded by combining the explicit formula for their Cheeger constant with the fact that the perimeter admits no local maximizer under area and convexity constraints. \ilias{ce n'est pas que ça, on utilise aussi mon inégalité. à mon avis il faut juste citer mon résultat et c'est bon. Pas la peine de résumer ce que j'ai fait}. 
\end{enumerate}

\end{proof}

\begin{remark}
The restriction to dimension \(m=2\) in the fourth assertion of Lemma \ref{lem:local-extrema}  comes from the
use of a result which is specific to planar convex sets. Indeed, in the plane, a characterization of Cheeger sets of convex domains due to Kawohl and
Lachand-Robert \cite{KawohlLachandRobert2006} is known and it is one of the main ingredients in the
perturbation argument of \cite[Proposition 4.10]{Ftouhi2025}. As far as we know, no generalization of this
argument is available for arbitrary dimensions
\(m\ge3\). 
\end{remark}

%========================================================
\section{Proof of Theorem \ref{thm:main-convex-bodies-diagram}: the diagram of convex bodies}\label{sec:thm_conv}
%========================================================

%The proof of Theorem~\ref{thm:main-convex-bodies-diagram} is distributed among Sections~3--6. The geometric estimates established in Lemma~\ref{lem:two-sided-cheeger-diameter} yield the growth bounds for the boundary functions and prove assertion~\textnormal{(v)}. The same estimates also provide the compactness needed for the attainment result in Proposition~\ref{prop:attainment-extrema}.

%The local extremal properties proved in Lemma~\ref{lem:local-extrema} are the main ingredients in the study of the boundary functions. Together with the attainment and continuity results, they yield Theorem~\ref{thm:boundary-functions}, which proves assertions~\textnormal{(ii)}--\textnormal{(iv)}. Assertion~\textnormal{(vi)} is proved in Proposition~\ref{prop:first-order-lower-boundary}.

%Finally, Proposition~\ref{prop:closedness-diagram} establishes the closedness of the diagram, while Theorem~\ref{thm:simple-connectedness-convex-bodies} proves the representation formula in assertion~\textnormal{(i)}. The simple connectedness of \(\mathcal D_m\) follows from this representation.\ilias{Ce paragraphe va changer quand tu changeras l'ordre des preuves}

%========================================================
\subsection{Study of the boundary of the diagram}
%========================================================

% \ilias{Ici il faut commencer par rappeler la définition du diagramme pour les convex bodies.}

% In this section, we prove that the extrema defining """the boundary functions""" \ilias{what are the boundary functions ? Please avoid using such non-precise references.}
% are attained, establish the continuity of both boundary functions and their
% strict monotonicity in the cases stated in
% Theorem~\ref{thm:main-convex-bodies-diagram}, determine the first-order
% behavior of the lower boundary at the ball, and derive equivalent
% formulations of the corresponding optimization problems.

Recall that the diagram under study is the following
\[
\mathcal D_m
:=
\left\{
\bigl(D(K),h(K)\bigr):
K\in\mathcal K_1^m
\right\}.
\]

In this subsection, we first prove that the extrema defining the boundary
functions \(f\) and \(g\) of the diagram, defined by
\eqref{eq:def-boundary-functions}, are attained. We then establish the
continuity of these functions and the strict monotonicity statements appearing
in Theorem~\ref{thm:main-convex-bodies-diagram}. After that, we derive
equivalent formulations of the corresponding optimization problems. Finally,
we determine the first-order behavior of the lower boundary near the point corresponding to the ball.
\begin{proposition}\label{prop:attainment-extrema}
Let \(B\subset\mathbb R^m\) be a ball of volume \(1\). The following assertions hold.
\begin{itemize}
    \item For every \(d\ge D(B)\), the extrema $\inf\backslash\sup\{h(K):K\in\mathcal K_1^m,\ D(K)=d\}$ are attained.

\item  For every \(h_*>h(B)\), the extrema $\inf\backslash\sup\{D(K):K\in\mathcal K_1^m,\ h(K)=h_*\}$
are attained.
\end{itemize}

\end{proposition}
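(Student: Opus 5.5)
Both assertions follow from the direct method in the calculus of variations, using Blaschke's selection theorem in $\mathcal K^m$. The two ingredients are continuity of $D$ and $h$ under Hausdorff convergence along the relevant sequences, and a uniform bound on diameters together with a non-degeneracy condition on the limit.

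\emph{First assertion.} Fix $d\ge D(B)$ and take a minimizing (resp.\ maximizing) sequence $(K_n)\subset\mathcal K_1^m$ with $D(K_n)=d$. The admissible class is nonempty: for $d=D(B)$ take $B$; for $d>D(B)$, the cylinders $K_L$ of Lemma~\ref{lem:two-sided-cheeger-diameter}, rescaled to unit volume, have continuous diameter, which tends to $D$ of a unit-volume cylinder as $L\to0^+$ and to $+\infty$ as $L\to+\infty$, so an intermediate value argument attains any large $d$. Alternatively, one can interpolate between $B$ and a long segment-like body via Minkowski combinations, using \eqref{eq:minko} for continuity of the volume.

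After translating so that $K_n$ contains the origin, we have $K_n\subset B(0,d)$. By Blaschke selection, a subsequence converges in Hausdorff distance to a compact convex $K$. By continuity of the volume and diameter, $|K|=1$ and $D(K)=d$, so $K$ is a convex body in $\mathcal K_1^m$.

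It remains to show $h(K_n)\to h(K)$. Lemma~\ref{lem:lower_r_d_A} gives $r(K_n)\ge 2^{m-1}/(m\omega_m d^{m-1})=:2\rho$ uniformly. Translate each $K_n$ so that a ball $B(x_n,2\rho)$ lies inside it; the centres $x_n$ can be taken to converge, and then the limit $K$ contains $B(x,2\rho)$. Recentre everything at $x$. For $n$ large, $B(x,\rho)\subset K_n$ and $B(x,\rho)\subset K$, so the support functions about $x$ are at least $\rho$. The Remark following Lemma~\ref{lem:cox-monotone-homogeneous} then gives
\[
|h(K_n)-h(K)|\le m\rho^{-2}d_H(K_n,K)\to0.
\]
Hence $K$ attains the infimum (resp.\ supremum).

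\emph{Second assertion.} Fix $h_*>h(B)$ and take an optimizing sequence $(K_n)\subset\mathcal K_1^m$ with $h(K_n)=h_*$. The class is nonempty because $h$ restricted to the unit-volume cylinders varies continuously from $+\infty$ as $L\to0$ to values near $h(B)$; more robustly, one can use a continuous path of unit-volume bodies from $B$ to thin cylinders.

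The key new point is a diameter bound. The lower estimate of Lemma~\ref{lem:two-sided-cheeger-diameter} with $|K_n|=1$ gives
\[
D(K_n)\le \frac{m}{\omega_{m-1}}\,h_*^{\,m-1}.
\]
With this uniform bound, the argument of the first part applies verbatim: Blaschke selection, the uniform inradius bound from Lemma~\ref{lem:lower_r_d_A}, and the Lipschitz estimate for $h$. The limit $K$ lies in $\mathcal K_1^m$ with $h(K)=h_*$ and $D(K)=\lim D(K_n)$, so $K$ attains the extremum.

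\emph{Main obstacle.} The delicate step is the continuity of $h$ along the sequence. Hausdorff convergence alone could degenerate, which is why the uniform inradius bound is essential. The argument also needs care in choosing a common centre so that the support functions are bounded below by $\rho$, as Lemma~\ref{lem:cox-monotone-homogeneous} requires. The nonemptiness of the constraint sets is secondary; it only needs a continuous family of unit-volume bodies sweeping the relevant ranges.
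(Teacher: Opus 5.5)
Your proof is correct and follows the paper's route. You apply Blaschke selection to optimizing sequences and use continuity of volume, diameter and $h$; for the second assertion you get the bound $D(K)\le \frac{m}{\omega_{m-1}}h_*^{m-1}$ from the lower estimate of Lemma~\ref{lem:two-sided-cheeger-diameter}. The only difference is that you derive the continuity of $h$ from Lemma~\ref{lem:lower_r_d_A} and the Lipschitz estimate of Lemma~\ref{lem:cox-monotone-homogeneous}, as the paper itself does in Step~2 of Theorem~\ref{thm:simple-connectedness-convex-bodies}, rather than citing Parini's continuity result.

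Your side remarks on nonemptiness via cylinders are wrong as stated. Unit-volume cylinders have diameter and Cheeger constant tending to $+\infty$ both as $L\to0^+$ and as $L\to+\infty$, so they approach neither $D(B)$ nor $h(B)$. Your fallback, a continuous path of unit-volume bodies starting at $B$ (for instance the ellipsoids $A_tB$ of Proposition~\ref{prop:first-order-lower-boundary}), does give nonemptiness for all $d\ge D(B)$ and all $h_*>h(B)$, a point the paper leaves implicit.
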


\begin{proof}
The result follows from Blaschke's selection theorem together with the continuity of the functionals \(D\) and \(h\).

In the case of fixed diameter, compactness follows directly from the diameter constraint. In the case of fixed Cheeger constant, compactness is a consequence of Lemma~\ref{lem:two-sided-cheeger-diameter}: if \(h(K)=h_*\) and \(|K|=1\), then $D(K)\le m/\omega_{m-1}h_*^{m-1}$.
Therefore, every minimizing or maximizing sequence is uniformly bounded up to translations. The conclusion follows by the direct method.
\end{proof}

\begin{proposition}\label{prop:growth-boundary-functions}
Let \(B\) be the Euclidean ball of volume \(1\), and set \(d_0:=D(B)\).
Let \(f\) and \(g\) be defined by \eqref{eq:def-boundary-functions}. Then there
exist constants \(c_m,C_m>0\), depending only on \(m\), such that, for every
\(x>d_0\), one has
\(c_m x^{1/(m-1)}\le f(x)\le g(x)\le C_m x^{m-1}\).
Moreover, the exponents \(1/(m-1)\) and \(m-1\) are sharp.
\end{proposition}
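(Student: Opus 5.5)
The two-sided bounds come straight from Lemma~\ref{lem:two-sided-cheeger-diameter} applied on \(\mathcal K_1^m\). For \(K\in\mathcal K_1^m\) with \(D(K)=x\), we have \(|K|=1\), so that lemma gives
\[
\Bigl(\tfrac{\omega_{m-1}}{m}\Bigr)^{1/(m-1)}x^{1/(m-1)}\le h(K)\le \tfrac{m\omega_m}{2^{m-1}}x^{m-1}.
\]
Taking the infimum and the supremum over all such \(K\) yields \(c_m x^{1/(m-1)}\le f(x)\) and \(g(x)\le C_m x^{m-1}\), with \(c_m=(\omega_{m-1}/m)^{1/(m-1)}\) and \(C_m=m\omega_m/2^{m-1}\). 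The middle inequality \(f(x)\le g(x)\) is immediate once the admissible class is nonempty for each \(x>d_0\). Nonemptiness can be checked by a connectedness argument: join \(B\) to a unit-volume elongated cylinder by volume-normalized Minkowski combinations and use the continuity of \(D\). Alternatively, Proposition~\ref{prop:attainment-extrema} guarantees the extrema are attained, provided the class is nonempty.

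For sharpness I will reuse the cylinders \(K_L\) from the proof of Lemma~\ref{lem:two-sided-cheeger-diameter}, normalized to unit volume: \(\widetilde K_L:=|K_L|^{-1/m}K_L\). Then \(D(\widetilde K_L)/|\widetilde K_L|\) equals \(D(K_L)/|K_L|\) times \(|K_L|^{-1/m}|K_L|\cdot|K_L|^{-1}\), so it suffices to note that the scale-invariant ratios \(h|K|^{1/m}\), \(D/|K|^{1/m}\) and the quotients computed in that lemma are homogeneous of degree \(0\). More directly:
\[
h(\widetilde K_L)=|K_L|^{1/m}h(K_L),\qquad D(\widetilde K_L)=|K_L|^{-1/m}D(K_L).
\]
The two regimes behave as follows.

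\textbf{Flat disks} (\(L\to+\infty\)). Here \(h(K_L)\asymp 1\) and \(|K_L|\asymp L^{m-1}\), so \(h(\widetilde K_L)\asymp L^{(m-1)/m}\), while \(D(\widetilde K_L)\asymp L\cdot L^{-(m-1)/m}=L^{1/m}\). Hence \(h(\widetilde K_L)\asymp D(\widetilde K_L)^{m-1}\) with \(D\to\infty\). This gives \(g(x_L)\ge h(\widetilde K_L)\ge c\,x_L^{m-1}\) along \(x_L:=D(\widetilde K_L)\to\infty\), so no exponent smaller than \(m-1\) can work.

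\textbf{Thin needles} (\(L\to0\)). Here \(h(K_L)\asymp L^{-1}\) and \(|K_L|\asymp L^{m-1}\), so \(h(\widetilde K_L)\asymp L^{-1/m}\) and \(D(\widetilde K_L)\asymp L^{-(m-1)/m}\). Hence \(h\asymp D^{1/(m-1)}\), giving \(f(x_L)\le C x_L^{1/(m-1)}\) with \(x_L\to\infty\), which shows the lower exponent cannot be increased.

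The only delicate point is making precise what "sharp" means. I will state it as follows: there is no \(\beta>1/(m-1)\) with \(f(x)\ge c x^\beta\) for large \(x\), and no \(\gamma<m-1\) with \(g(x)\le Cx^\gamma\) for large \(x\). This follows from the two families above, since their diameters exhaust a neighborhood of \(+\infty\) by continuity in \(L\). I expect no real obstacle beyond carefully tracking the scaling exponents.
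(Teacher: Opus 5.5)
Your proposal is correct and follows the paper's proof. The two-sided bounds come from Lemma~\ref{lem:two-sided-cheeger-diameter} with $|K|=1$, and sharpness comes from the same cylinders $K_L$. Your volume normalization $\widetilde K_L$, which shows $D(\widetilde K_L)\to+\infty$ in both regimes with $h\asymp D^{m-1}$ for flat disks and $h\asymp D^{1/(m-1)}$ for thin needles, simply makes explicit the scale invariance that the paper leaves implicit when it says sharpness ``follows from the optimality statement'' of that lemma.

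The only blemish is your abandoned aside on $D(\widetilde K_L)/|\widetilde K_L|$. The factor you write there simplifies to $|K_L|^{-1/m}$, whereas the correct factor is $|K_L|^{1-1/m}$. This plays no role, since your subsequent formulas $h(\widetilde K_L)=|K_L|^{1/m}h(K_L)$ and $D(\widetilde K_L)=|K_L|^{-1/m}D(K_L)$ are right.
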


\begin{proof}
Let \(x>d_0\), and let \(K\in\mathcal K_1^m\) be such that \(D(K)=x\).
Since \(|K|=1\), Lemma~\ref{lem:two-sided-cheeger-diameter} gives
\[
\left(\frac{\omega_{m-1}}{m}\right)^{1/(m-1)}
x^{1/(m-1)}
\le h(K)\le
\frac{m\,\omega_m}{2^{m-1}}x^{m-1}.
\]
Taking the infimum over all \(K\in\mathcal K_1^m\) such that \(D(K)=x\), we get
\(c_m x^{1/(m-1)}\le f(x)\). Taking the supremum over the same class gives
\(g(x)\le C_m x^{m-1}\). Inequality \(f(x)\le g(x)\) follows from the
definitions of \(f\) and \(g\). The sharpness of the exponents follows from the optimality statement in Lemma~\ref{lem:two-sided-cheeger-diameter}.
\end{proof}
\begin{proposition} \label{prop:boundary-functions}
Let \(\mathcal K_1^m\) denote the class of convex bodies in \(\mathbb R^m\) with volume \(1\), and let \(\mathcal D_m\) be the associated Blaschke--Santal\'o diagram. Let \(B\) be the Euclidean ball of volume \(1\), and set \(d_0:=D(B)\). For every \(x\ge d_0\), define
\[
f(x):=\inf\bigl\{h(K):K\in\mathcal K_1^m,\ D(K)=x\bigr\},
\qquad
g(x):=\sup\bigl\{h(K):K\in\mathcal K_1^m,\ D(K)=x\bigr\}.
\]
Then, the following assertions hold.
\begin{enumerate}[label=\textnormal{(\roman*)}]
\item The functions \(f\) and \(g\) are continuous on \([d_0,+\infty)\).
\item The function \(f\) is strictly increasing on \([d_0,+\infty)\).
\item If \(m=2\), then \(g\) is strictly increasing on \([d_0,+\infty)\).
\end{enumerate}
\end{proposition}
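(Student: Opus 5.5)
We split the argument into three parts: semicontinuity of \(f\) and \(g\), the reverse semicontinuity via perturbation, and strict monotonicity from the local extremal properties in Lemma~\ref{lem:local-extrema}. All extrema are attained by Proposition~\ref{prop:attainment-extrema}.

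\textbf{Continuity.} First we show that \(f\) is lower semicontinuous and \(g\) is upper semicontinuous. Let \(x_n\to x\), and let \(K_n\) be optimal bodies with \(D(K_n)=x_n\). The diameters are uniformly bounded, so by Lemma~\ref{lem:lower_r_d_A} the inradii satisfy \(r(K_n)\ge\rho>0\). Blaschke's selection theorem then gives, up to translations, a limit \(K\in\mathcal K_1^m\) with \(D(K)=x\). The Cheeger constant passes to the limit: after translating so that a common ball \(B(c_n,\rho)\) sits inside each \(K_n\) (centres converging), the remark after Lemma~\ref{lem:cox-monotone-homogeneous} gives \(|h(K_n)-h(K)|\le m\rho^{-2}d_H(K_n,K)\). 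Hence \(\liminf f(x_n)\ge f(x)\) and \(\limsup g(x_n)\le g(x)\).

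For the reverse inequalities, take an optimal \(K\) at level \(x\) and build a continuous path through \(K\) in \(\mathcal K_1^m\) whose diameter runs over a neighbourhood of \(x\). Two paths are available:
\begin{itemize}
\item the homotopy \(t\mapsto|(1-t)K+tB|^{-1/m}((1-t)K+tB)\), which moves from \(K\) to the ball and so decreases the diameter towards \(d_0\), continuously by the mixed-volume continuity~\eqref{eq:minko};
\item the segment path \(t\mapsto |K+tS|^{-1/m}(K+tS)\), with \(S\) a segment along a diametral direction, which increases the diameter to \(+\infty\).
\end{itemize}
Along both paths \(D\) and \(h\) are continuous, by the Lipschitz estimate above (the inradius stays bounded below). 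By the intermediate value theorem, for \(x_n\) close to \(x\) on either side there is \(K^{(n)}\) on the appropriate path with \(D(K^{(n)})=x_n\) and \(K^{(n)}\to K\). Then \(f(x_n)\le h(K^{(n)})\to f(x)\), and symmetrically for \(g\). This gives upper semicontinuity of \(f\) and lower semicontinuity of \(g\).

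\textbf{Monotonicity of \(f\).} Here we use the attained problem \(\min\{D(K): h(K)=h_*\}\) from Proposition~\ref{prop:attainment-extrema}, together with Lemma~\ref{lem:local-extrema}. Suppose \(f\) is not strictly increasing. Since \(f\) is continuous and \(f(d_0)=h(B)<f(x)\) for every \(x>d_0\) (the ball is the unique body with diameter \(d_0\)), there exist \(d_0<x_1<x_2\) with \(f(x_1)\ge f(x_2)\).

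By continuity and the intermediate value theorem, there is a point \(x^*\in(d_0,x_2]\) where \(f\) attains its maximum on \([d_0,x_2]\). This step needs care: we must ensure \(x^*\) can be taken to be a local maximum of \(f\) with \(x^*<x_2\), or else exploit a plateau, so that some value is attained twice. Let \(K^*\) be a minimizer for \(f(x^*)\). Then every body \(L\) near \(K^*\) satisfies \(h(L)\ge f(D(L))\), and we want this to contradict the local properties of \(K^*\).

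The cleaner route is the continuous Steiner symmetrization of Lemma~\ref{lem:css-cheeger-decay}. It gives \(K_t\to K^*\) with \(h(K_t)<h(K^*)\), while the diameter does not increase, \(D(K_t)\le D(K^*)\), since Steiner symmetrization is diameter non-increasing. If we first choose \(x^*\) as the smallest point of \([d_0,x_2]\) where \(f\) attains \(\max_{[d_0,x_2]} f\), then every \(x<x^*\) has \(f(x)\) strictly below that maximum. Applying the symmetrization at such a point \(x^*\) yields a contradiction.

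More directly, the plan is to prove that \(f(y)<f(x)\) whenever \(y<x\). Take \(K\) optimal at \(x\), apply the symmetrization to obtain \(K_t\) with \(D(K_t)\le x\) and \(h(K_t)<f(x)\), and then join \(K_t\) to the ball by the Minkowski homotopy. That homotopy is intended to have Cheeger constant below \(f(x)\) while its diameter passes through every value in \([d_0,D(K_t)]\). Controlling \(h\) along this homotopy is the \emph{main obstacle}. If it fails, we fall back on the max-point argument above, which needs only continuity and the local decrease.

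Finally, \(D(K_t)\) may drop below \(y\), so we also need to reach diameters between \(D(K_t)\) and \(x\). To do this, choose \(t\) small, so that \(D(K_t)\) is close to \(x\), and use continuity of \(f\).

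\textbf{Monotonicity of \(g\) for \(m=2\).} We argue symmetrically, using part~4 of Lemma~\ref{lem:local-extrema}: \(h\) has no local maximizer on \(\mathcal K_1^2\). By continuity, a failure of strict increase produces a point \(x^*>d_0\) and a body \(K^*\) realizing \(g(x^*)\) such that \(g(y)\le g(x^*)\) for every \(y\) in a neighbourhood of \(x^*\). Nearby bodies \(L\) then satisfy \(h(L)\le g(D(L))\le g(x^*)=h(K^*)\), so \(K^*\) is a local maximizer of \(h\), a contradiction.

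The same max-point argument, run with local minima and part~2 of Lemma~\ref{lem:local-extrema}, gives the monotonicity of \(f\) cleanly: a local minimum of \(f\) at \(x^*>d_0\) makes its minimizer a local minimizer of \(h\), hence a ball, which is impossible since \(x^*>d_0\). A continuous function on \([d_0,\infty)\) with no interior local extremum of the relevant kind, and with \(f(x)>f(d_0)\) for \(x>d_0\), is strictly increasing. This step is where we expect the difficulty to concentrate.
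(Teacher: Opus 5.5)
Parts (i) and (iii) are sound and follow the paper's scheme. In (i), semicontinuity in one direction comes from Blaschke selection plus continuity of $h$, and in the other direction from a perturbation path through an optimal body. In (iii), for $m=2$, an interior local maximum of $g$ makes its maximizer a local maximizer of $h$ on $\mathcal K_1^2$. Such a maximum exists because $g(d_0)$ is the strict global minimum of $g$, so $\max_{[d_0,x_2]}g$ is attained in the interior.

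Your two paths differ from the perturbations the paper borrows from Lemma~\ref{lem:local-extrema}. Each needs one line you did not write: that the diameter lies strictly on the correct side of $x$ for every $t>0$. This is what forces the intermediate-value parameters $t_n\to0$, and hence $K^{(n)}\to K$.
\begin{itemize}
\item For the ball homotopy, $D((1-t)K+tB)=(1-t)x+td_0$ because $B$ has constant width, and $|(1-t)K+tB|\ge 1$ by Brunn--Minkowski.
\item For the segment path, the normalized diameter is $(x+t)(1+ta)^{-1/m}$ with $a=\mathcal H^{m-1}(\operatorname{Proj}_u K)$. It is increasing only because $ax\le m|K|$, which follows from concavity of the chord length over $\operatorname{Proj}_u K$.
\end{itemize}
The volume-preserving stretch $T_\lambda$ of Step~4, with $D(T_\lambda K)=\lambda x$, avoids this last point.

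The genuine gap is in (ii).
\begin{itemize}
\item The route via the smallest maximum point $x^*$ of $f$ on $[d_0,x_2]$ gives no contradiction. The symmetrized bodies satisfy $h(K_t)<f(x^*)$ and $D(K_t)\le x^*$, which is entirely compatible with $f<f(x^*)$ to the left of $x^*$.
\item The Minkowski-homotopy route is left open.
\item Your closing argument is the right one and is exactly the paper's: an interior local minimum of $f$ turns its minimizer into a local minimizer of $h$, hence into a ball. But the real-variable claim you use to produce that local minimum is false. The function $x\mapsto h(B)+(x-d_0)e^{-(x-d_0)}$ is continuous, exceeds its value at $d_0$ on all of $(d_0,\infty)$, is not increasing, and has no local minimum there.
\end{itemize}

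The missing ingredient is the growth bound $f(x)\ge c_m x^{1/(m-1)}\to+\infty$ from Lemma~\ref{lem:two-sided-cheeger-diameter} (Proposition~\ref{prop:growth-boundary-functions}). With it the argument closes:
\begin{itemize}
\item Suppose $x_1<x_2$ and $f(x_1)\ge f(x_2)$, and pick $x_3>x_2$ with $f(x_3)>f(x_1)$.
\item The minimum of $f$ on $[x_1,x_3]$ is at most $f(x_2)<f(x_3)$, so it is not attained at $x_3$.
\item If it is attained at $x_1$, then it is also attained at $x_2$.
\item Either way it is attained at some $x^*\in(x_1,x_3)$, which is a local minimum of $f$ with $x^*>d_0$.
\item Assertion~2 of Lemma~\ref{lem:local-extrema} then gives the contradiction.
\end{itemize}
The paper's own proof of (ii) asserts that ``$f$ must attain a local minimum'' with the same unjustified jump, so this extra line is needed there as well.
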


\begin{proof}
Let us present the proof of each assertion. 
\begin{enumerate}[label=\textnormal{(\roman*)}]
\item Let \((x_n)\subset [d_0,+\infty)\) be such that \(x_n\to x_0\). 

$\bullet$ We begin by proving the lower limit inequality
\[
f(x_0)\le \liminf_{n\to\infty} f(x_n).
\]

Choose a subsequence \((x_{n_k})\) such that
\(f(x_{n_k}) \to \liminf_{n\to\infty} f(x_n)\).
For each \(k\), let \(K_k\in\mathcal K_1^m\) be a minimizer satisfying
\(D(K_k)=x_{n_k}\) and \(h(K_k)=f(x_{n_k})\).
Since \(x_{n_k}\to x_0\), the diameters \(D(K_k)\) are uniformly bounded. As \(|K_k|=1\), Blaschke's selection theorem applies up to translations. Hence, we may assume that \(K_k\underset{k\rightarrow+\infty}{\longrightarrow} K_*\) in the Hausdorff metric for some convex body \(K_*\).

By the continuity of the volume, the diameter, and the Cheeger constant with respect to the Hausdorff distance in the class of convex bodies ; see
Subsection~\ref{subsec:convex-bodies-notation} and
\cite[Proposition~3.1]{Parini2017},
\(|K_*|=1\), \(D(K_*)=x_0\), and \(h(K_*)=\lim_{k\to\infty} h(K_k)\).
Thus \(K_*\in\mathcal K_1^m\) and \(D(K_*)=x_0\). By the definition of \(f(x_0)\), we obtain
\(f(x_0)\le h(K_*)\).
Since \(h(K_k)=f(x_{n_k})\), it follows that
\(f(x_0)\le h(K_*)=\lim_{k\to\infty} f(x_{n_k})=\liminf_{n\to\infty} f(x_n)\).

$\bullet$ Let us now show the upper limit inequality
$$\limsup_{n\to\infty} f(x_n)\le f(x_0).$$

Let \(K_0\in\mathcal K_1^m\) be a minimizer for \(f(x_0)\), so that \(D(K_0)=x_0\) and \(h(K_0)=f(x_0)\). By the diameter perturbations used in the proof of Lemma~\ref{lem:local-extrema}, there exists a sequence \(L_n\in\mathcal K_1^m\) such that \(L_n\to K_0\) in the Hausdorff metric and \(D(L_n)=x_n\) for every \(n\). Since \(L_n\) is admissible for \(f(x_n)\), we have \(f(x_n)\le h(L_n)\). Passing to the upper limit and using the continuity of \(h\), we obtain
\[
\limsup_{n\to\infty} f(x_n)
\le \lim_{n\to\infty} h(L_n)
= h(K_0)
= f(x_0).
\]

Together with the lower limit inequality, this gives \(f(x_n)\to f(x_0)\). Hence \(f\) is continuous at \(x_0\). and since \(x_0\) was arbitrary, \(f\) is continuous on \([d_0,+\infty)\).
The continuity of \(g\) follows by the same argument, replacing minimizers by maximizers throughout.

\item Let us prove that \(f\) is strictly increasing. Assume by contradiction that this is not the case. Since \(f\) is continuous on \([d_0,+\infty)\), and since \(f(x)>f(d_0)=h(B)\) for every \(x>d_0\), the function \(f\) must attain a local minimum at some point \(x_0>d_0\).

Let \(K_0\in\mathcal K_1^m\) be such that \(D(K_0)=x_0\) and \(h(K_0)=f(x_0)\). By local minimality of \(x_0\), there exists \(\varepsilon>0\) such that \(f(x_0)\le f(x)\) for every \(x\in(x_0-\varepsilon,x_0+\varepsilon)\). Hence, if \(K\in\mathcal K_1^m\) and \(D(K)\in(x_0-\varepsilon,x_0+\varepsilon)\), then
\[
h(K_0)=f(x_0)\le f(D(K))\le h(K).
\]

Since the diameter is continuous with respect to Hausdorff convergence, this shows that \(K_0\) is a local minimizer of the Cheeger constant on \(\mathcal K_1^m\). By the second assertion of Lemma~\ref{lem:local-extrema}, \(K_0\) must be the Euclidean ball. This is impossible, since \(D(K_0)=x_0>d_0=D(B)\). The contradiction proves that \(f\) is strictly increasing.
\item Assume now that \(m=2\) and that \(g\) is not strictly increasing. Since \(g\) is continuous on \([d_0,+\infty)\), and since \(g(x)>g(d_0)=h(B)\) for every \(x>d_0\), the function \(g\) must attain a local maximum at some point \(x_0>d_0\).

Choose \(K_0\in\mathcal K_1^2\) such that \(D(K_0)=x_0\) and \(h(K_0)=g(x_0)\). By local maximality of \(x_0\), there exists \(\varepsilon>0\) such that \(g(x)\le g(x_0)\) for every \(x\in(x_0-\varepsilon,x_0+\varepsilon)\). Therefore, if \(K\in\mathcal K_1^2\) and \(D(K)\in(x_0-\varepsilon,x_0+\varepsilon)\), then
\[
h(K)\le g(D(K))\le g(x_0)=h(K_0).
\]
It follows, again by the continuity of the diameter, that \(K_0\) is a local maximizer of the Cheeger constant on \(\mathcal K_1^2\). This contradicts the fourth assertion of Lemma~\ref{lem:local-extrema}. Hence \(g\) is strictly increasing in the planar case.
\end{enumerate}

\end{proof}
\begin{remark}
The proof of assertion~\textnormal{(iii)} shows that, for a fixed
\(m\ge2\), the strict monotonicity of the function \(g\) would follow from the eventual nonexistence of
local maximizers of the Cheeger constant on \(\mathcal K_1^m\), that, as far as we know, remains open.  
\end{remark}
\begin{corollary}[Equivalent formulations of the boundary problems] 
Let \(x>d_0\). In every dimension \(m\ge 2\), the following problems are equivalent:

\medskip

\noindent
\begin{tabular}{@{}p{0.48\textwidth}p{0.48\textwidth}@{}}
\textnormal{(i)} \(\min\bigl\{h(K):K\in\mathcal K_1^m,\ D(K)=x\bigr\}\);
&
\textnormal{(ii)} \(\min\bigl\{h(K):K\in\mathcal K_1^m,\ D(K)\ge x\bigr\}\);
\\[1.2em]
\textnormal{(iii)} \(\max\bigl\{D(K):K\in\mathcal K_1^m,\ h(K)=f(x)\bigr\}\);
&
\textnormal{(iv)} \(\max\bigl\{D(K):K\in\mathcal K_1^m,\ h(K)\le f(x)\bigr\}\).
\end{tabular}

\medskip

If \(m=2\), the following problems are also equivalent:

\medskip

\noindent
\begin{tabular}{@{}p{0.48\textwidth}p{0.48\textwidth}@{}}
\textnormal{(i)} \(\max\bigl\{h(K):K\in\mathcal K_1^2,\ D(K)=x\bigr\}\);
&
\textnormal{(ii)} \(\max\bigl\{h(K):K\in\mathcal K_1^2,\ D(K)\le x\bigr\}\);
\\[1.2em]
\textnormal{(iii)} \(\min\bigl\{D(K):K\in\mathcal K_1^2,\ h(K)=g(x)\bigr\}\);
&
\textnormal{(iv)} \(\min\bigl\{D(K):K\in\mathcal K_1^2,\ h(K)\ge g(x)\bigr\}\).
\end{tabular}
\end{corollary}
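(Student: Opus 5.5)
The plan is to deduce everything from Proposition~\ref{prop:attainment-extrema} and the strict monotonicity statements of Proposition~\ref{prop:boundary-functions}. I interpret ``equivalent'' as follows: the four problems have the same optimal value, understood in the correct variable. That is, (i) and (ii) both have value \(f(x)\), while (iii) and (iv) both have value \(x\). Moreover, the four problems have the same set of optimizers. I treat the lower boundary first, for every \(m\ge2\). The planar upper boundary is handled by the mirror argument, with \(f\) replaced by \(g\), inequalities reversed, and the assertion ``\(g\) strictly increasing'' used in place of ``\(f\) strictly increasing''.

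\textbf{Step 1: (i)\(\Leftrightarrow\)(ii).} Problem (ii) has more competitors, so \(\inf(\text{ii})\le f(x)\). Conversely, any \(K\in\mathcal K_1^m\) with \(D(K)=x'\ge x\) satisfies \(h(K)\ge f(x')\ge f(x)\), since \(f\) is increasing. Hence \(\inf(\text{ii})=f(x)\), and this value is attained by the minimizer of (i) given by Proposition~\ref{prop:attainment-extrema}. For the optimizers, suppose \(K\) is optimal for (ii) with \(D(K)=x'>x\). Then \(f(x)=h(K)\ge f(x')>f(x)\) by strict monotonicity, which is a contradiction. So every minimizer of (ii) has \(D(K)=x\) and is a minimizer of (i). The converse is immediate.

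\textbf{Step 2: (iii)\(\Leftrightarrow\)(iv) and the link with (i).} Let \(K\) satisfy \(h(K)\le f(x)\) and put \(x'=D(K)\). Then \(f(x')\le h(K)\le f(x)\), so \(x'\le x\) because \(f\) is strictly increasing. Any minimizer of (i) is admissible for (iii) and has diameter \(x\). Therefore the maxima of (iii) and (iv) both equal \(x\). Now take a maximizer \(K\) of (iv). It has \(D(K)=x\), so \(h(K)\ge f(x)\), and combined with \(h(K)\le f(x)\) this gives \(h(K)=f(x)\). Hence \(K\) is admissible for (iii), and it is also a minimizer of (i). Conversely, a maximizer of (iii) has \(D=x\) and \(h=f(x)\), so it is a minimizer of (i). We conclude that the optimizer sets of all four problems coincide with the set \(\{K\in\mathcal K_1^m: D(K)=x,\ h(K)=f(x)\}\).

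\textbf{Step 3: the planar upper boundary.} Here \(m=2\) and we use assertion (iii) of Proposition~\ref{prop:boundary-functions}. The same chain of arguments applies with all inequalities reversed. For instance, \(D(K)\le x\) implies \(h(K)\le g(D(K))\le g(x)\). Likewise, \(h(K)\ge g(x)\) implies \(g(D(K))\ge g(x)\), hence \(D(K)\ge x\). The restriction \(D\ge d_0\) causes no trouble, because \(x>d_0\).

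The main subtlety is not computational. It lies in fixing the precise meaning of ``equivalent'' and in justifying that the value of (iii) equals \(x\). This requires the existence of an admissible body with \(h=f(x)\) and \(D=x\), which is exactly the attainment part of Proposition~\ref{prop:attainment-extrema}. Strict, rather than weak, monotonicity is what forces every optimizer of the relaxed problems to satisfy the constraints with equality.
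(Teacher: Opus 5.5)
Your proposal is correct and follows the same route as the paper. The paper simply invokes the monotonicity argument of Ftouhi--Lamboley (Corollary 3.13): attainment of the extrema plus strict monotonicity of \(f\) (and of \(g\) when \(m=2\)) forces every optimizer of the relaxed problems to saturate the constraint, and you write this out in more detail than the paper does. The one imprecision is your closing remark on the domain: what makes \(f(D(K))\) and \(g(D(K))\) well defined is the isodiametric inequality \(D(K)\ge d_0\) for every \(K\in\mathcal K_1^m\), not the hypothesis \(x>d_0\).
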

\begin{proof}
The proof is the same monotonicity argument as in the proof of
\cite[Corollary~3.13]{FtouhiLamboley2021}. For the first four problems,
one uses the definition of \(f\) together with its monotonicity.
The equivalence between the second four problems follows in the same way,
using the definition of the function \(g\) and its monotonicity in dimension two.
\end{proof}

\begin{proposition}[First-order behavior of the lower boundary near the ball]\label{prop:first-order-lower-boundary}
Let \(B\) be the Euclidean ball of volume \(1\), and set \(d_0:=D(B)\). Then the lower boundary function \(f\) has a right derivative at \(d_0\), and
$
f'_+(d_0)=0$.
\end{proposition}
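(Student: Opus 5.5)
The plan is to squeeze the difference quotient \((f(x)-f(d_0))/(x-d_0)\) between \(0\) and a quantity that is \(O(x-d_0)\). I will do this with an explicit family of ellipsoids, using only the trivial bound \(h\le P/|\cdot|\) and the fact that the ball is optimal for the perimeter.

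First, \(f(d_0)=h(B)\). By the isodiametric inequality with its equality case, the only \(K\in\mathcal K_1^m\) with \(D(K)=d_0\) is a unit-volume ball. Moreover \(f(x)\ge h(B)\) for all \(x\ge d_0\), by the Faber--Krahn inequality for the Cheeger constant (or the second assertion of Lemma~\ref{lem:local-extrema}). This gives the lower bound \((f(x)-f(d_0))/(x-d_0)\ge 0\).

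For the upper bound, let \(R\) be the radius of \(B\), so that \(d_0=2R\) and \(h(B)=m/R=P(B)/|B|=P(B)\), since the ball is its own Cheeger set. For \(\varepsilon\ge0\), let \(E_\varepsilon\) be the ellipsoid with semi-axes \(R(1+\varepsilon)\) and \(R(1+\varepsilon)^{-1/(m-1)}\) (the latter repeated \(m-1\) times). Then \(|E_\varepsilon|=1\) and \(D(E_\varepsilon)=2R(1+\varepsilon)\). Hence every \(x>d_0\) is attained as \(D(E_\varepsilon)\) with \(\varepsilon=(x-d_0)/(2R)\). Using the trivial estimate \(h(K)\le P(K)/|K|\), we get
\[
f(x)\le h(E_\varepsilon)\le P(E_\varepsilon).
\]
The map \(\varepsilon\mapsto P(E_\varepsilon)\) is smooth near \(0\). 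Indeed, \(E_\varepsilon=A_\varepsilon B\) for a diagonal matrix \(A_\varepsilon\) depending smoothly on \(\varepsilon\), and
\[
P(A_\varepsilon B)=\int_{\partial B}|\det A_\varepsilon|\,|A_\varepsilon^{-T}\nu|\,d\mathcal H^{m-1},
\]
whose integrand is smooth in \(\varepsilon\) and bounded. The construction extends to \(\varepsilon\) slightly negative, staying in \(\mathcal K_1^m\). By the isoperimetric inequality, \(\varepsilon=0\) is an interior minimum of \(\varepsilon\mapsto P(E_\varepsilon)\), so its derivative vanishes there. Consequently \(P(E_\varepsilon)=P(B)+O(\varepsilon^2)=h(B)+O(\varepsilon^2)\).

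Combining the two bounds gives
\[
0\le \frac{f(x)-f(d_0)}{x-d_0}\le C\,(x-d_0)
\]
for \(x\) close to \(d_0\), and letting \(x\to d_0^+\) yields \(f'_+(d_0)=0\). The step needing the most care is the second-order expansion of \(P(E_\varepsilon)\), specifically justifying smoothness through \(\varepsilon=0\) so that minimality forces a vanishing first derivative. The closed-form parametrization of the perimeter of a linear image of the ball makes this routine. No information on the Cheeger set of \(E_\varepsilon\) is needed, because the ball's self-Cheeger property makes the trivial bound \(h\le P/|\cdot|\) sharp at \(\varepsilon=0\).
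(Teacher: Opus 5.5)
Your proof is correct and follows the paper's argument: you use the same volume-preserving ellipsoids $A_\varepsilon B$ and the same chain $0\le f(x)-f(d_0)\le h(E_\varepsilon)-h(B)\le P(E_\varepsilon)-P(B)$. The only difference is how the first-order term of $P(E_\varepsilon)$ is shown to vanish. The paper expands $|A_\varepsilon^{-T}\omega|$ and uses $\int_{\mathbb S^{m-1}}\omega_1^2\,d\mathcal H^{m-1}=\frac1m\mathcal H^{m-1}(\mathbb S^{m-1})$, whereas you extend the family to $\varepsilon<0$ and use the isoperimetric inequality (an interior minimum of a smooth function), which avoids the computation and gives the slightly sharper bound $O\bigl((x-d_0)^2\bigr)$ instead of $o(x-d_0)$.
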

\begin{proof}
Let \(B=B(0,R)\) be the Euclidean ball of volume \(1\), so that \(d_0=D(B)=2R\). For \(t\ge 0\), consider the volume-preserving linear map
\[
A_t:=\operatorname{diag}\left(1+t,(1+t)^{-1/(m-1)},\ldots,(1+t)^{-1/(m-1)}\right),
\]
and set \(K_t=A_tB\). Since \(\det A_t=1\), we have \(|K_t|=1\). Moreover, the diameter of \(K_t\) is attained in the \(e_1\)-direction, and therefore
$D(K_t)=(1+t)d_0$. Moreover, if we set \(x_t:=D(K_t)\), we have \(x_t-d_0=d_0t\).

Let us now estimate \(P(K_t)\). By the area formula for linear images of hypersurfaces, see \cite[Lemma~1.4]{Schmidt2015}, applied to \(x\mapsto A_tx\), we can write
\[
P(K_t)=\int_{\partial B}\left|\operatorname{cof}(A_t)\nu_B\right|\,d\mathcal H^{m-1},
\]
where \(\nu_B\) is the outer unit normal to \(\partial B\). Since \(\det A_t=1\), \(\operatorname{cof}(A_t)=A_t^{-T}\).

Since \(\partial B=R\cdot\mathbb S^{m-1}\) and \(\nu_B(R\omega)=\omega\), we obtain
\[
P(K_t)
=
R^{m-1}
\int_{\mathbb S^{m-1}}
|A_t^{-T}\omega|\,d\mathcal H^{m-1}.
\]

Now
\[
A_t^{-T}
=
\operatorname{diag}\left((1+t)^{-1},(1+t)^{1/(m-1)},\ldots,(1+t)^{1/(m-1)}\right).
\]

Thus, for \(\omega=(\omega_1,\omega_2,\ldots,\omega_m)\in\mathbb S^{m-1}\), we have
\[
|A_t^{-T}\omega|
=
1+t\left(
\frac{1}{m-1}
-
\frac{m}{m-1}\omega_1^2
\right)
+o(t),
\]
uniformly on \(\mathbb S^{m-1}\). Therefore
\[
P(K_t)
=
P(B)
+
tR^{m-1}
\int_{\mathbb S^{m-1}}
\left(
\frac{1}{m-1}
-
\frac{m}{m-1}\omega_1^2
\right)
d\mathcal H^{m-1}
+
o(t).
\]

The integral in the coefficient of \(t\) vanishes. Indeed, by rotational symmetry,
\[
\int_{\mathbb S^{m-1}}\omega_1^2\,d\mathcal H^{m-1}
=
\frac1m\mathcal H^{m-1}(\mathbb S^{m-1}).
\]

Hence $$P(K_t)=P(B)+o(t).$$

Since \(K_t\in\mathcal K_1^m\) and \(D(K_t)=x_t\), it is admissible for \(f(x_t)\), we have
$ f(x_t)\le h(K_t)$.

Also \(f(d_0)=h(B)\), because the ball is the unique body of volume \(1\) and diameter \(d_0\). Moreover,
$h(K_t)\le P(K_t)$, and 
$h(B)=P(B)$, because \(|K_t|=|B|=1\). Consequently,
\[
0\le f(x_t)-f(d_0)
\le h(K_t)-h(B)
\le P(K_t)-P(B)
=o(t).
\]

Finally, \(x_t-d_0=d_0t\). Hence
\[
0
\le
\frac{f(x_t)-f(d_0)}{x_t-d_0}
\le
o(1).
\]
Since $x_t\underset{t\to 0^+}{\longrightarrow} d_0$, we obtain
$f'_+(d_0)=0$, which completes the proof.
\end{proof}

%========================================================
\subsection{Simple connectedness of the diagram}
%========================================================
In this subsection, we prove that \(\mathcal D_m\) is closed and that it is
exactly the region between the graphs of the functions \(f\) and \(g\) defined
in \eqref{eq:def-boundary-functions}. The simple
connectedness of the diagram follows from this description.
%\subsection{Closedness of the diagram}

Let us first prove that \(\mathcal D_m\) is closed by using Blaschke's selection
theorem and the continuity of volume, diameter, and the Cheeger constant
under Hausdorff convergence.

\begin{proposition}\label{prop:closedness-diagram}
Let $m\ge 2$. The Blaschke--Santal\'o diagram
$$
\mathcal{D}_m:=\{(D(K),h(K)):\ K\in\mathcal{K}_1^m\}
$$
is a closed subset of $\mathbb{R}^2$.
\end{proposition}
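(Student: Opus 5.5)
We argue by sequential closedness. Take a sequence of points $(x_n,y_n)\in\mathcal D_m$ converging to some $(x_0,y_0)\in\mathbb R^2$, and pick $K_n\in\mathcal K_1^m$ with $D(K_n)=x_n$ and $h(K_n)=y_n$. The aim is to produce $K_*\in\mathcal K_1^m$ with $(D(K_*),h(K_*))=(x_0,y_0)$.

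The first step is compactness. Since $(x_n)$ converges, it is bounded, so $D(K_n)\le M$ for some $M$ independent of $n$. After translating each $K_n$ so that it contains the origin, all $K_n$ lie in the closed ball $B(0,M)$. Blaschke's selection theorem then gives a subsequence, still denoted $K_n$, converging in the Hausdorff metric to a compact convex set $K_*$.

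The second step, which is the only delicate point, is to rule out degeneration: the limit must have nonempty interior, so that $K_*\in\mathcal K^m$. Here we use Lemma~\ref{lem:lower_r_d_A}. Since $|K_n|=1$, it gives
\[
r(K_n)\ge \frac{2^{m-1}}{m\,\omega_m}\,M^{1-m}=:\rho>0 .
\]
So each $K_n$ contains a ball $B(c_n,\rho)$. The centers $c_n$ lie in $B(0,M)$, hence converge along a further subsequence to some $c$. Inclusion of balls passes to Hausdorff limits, so $B(c,\rho)\subset K_*$ and $K_*$ is a convex body. Alternatively, continuity of the volume already gives $|K_*|=\lim|K_n|=1>0$, which excludes lower-dimensional limits; the inradius bound is still useful for the next step.

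The third step is to pass to the limit in the functionals. By the continuity of the volume and the diameter under Hausdorff convergence recalled in Subsection~\ref{subsec:convex-bodies-notation}, we get $|K_*|=1$ and $D(K_*)=x_0$. For the Cheeger constant we can invoke its continuity on convex bodies \cite[Proposition~3.1]{Parini2017}. Alternatively, after translating so that $B(0,\rho)$ lies inside every $K_n$ (possible for large $n$ by the convergence of the centers) and inside $K_*$, the support functions are bounded below by $\rho$. The Remark following Lemma~\ref{lem:cox-monotone-homogeneous} then gives
\[
|h(K_n)-h(K_*)|\le \frac{m}{\rho^2}\,d_H(K_n,K_*)\longrightarrow0 .
\]
Either way $h(K_*)=y_0$, so $(x_0,y_0)\in\mathcal D_m$, and $\mathcal D_m$ is closed.
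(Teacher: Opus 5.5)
Your proof is correct and follows the same route as the paper: Blaschke selection under the uniform diameter bound, then continuity of volume, diameter and the Cheeger constant (via \cite[Proposition~3.1]{Parini2017}) to identify the limit point, with your explicit non-degeneracy check (through Lemma~\ref{lem:lower_r_d_A} or through $|K_*|=1$) simply making precise a step the paper leaves implicit. One small precision in your alternative Lipschitz route: a single common translation does not place $B(0,\rho)$ inside every $K_n$, so either use the ball $B(c,\rho/2)$ for $n$ large, or translate each $K_n$ by its own center $c_n$ (the translated bodies $K_n-c_n$ still converge to $K_*-c$, and $h$ is translation invariant).
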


%\ilias{Je dirais que Proposition \ref{prop:attainment-extrema} est un corollaire de cette proposition. N'est ce pas le cas ? si oui, il faut donc supprimer une preuve pour eviter la redendance}
%{\color{red}
%Zakaria dit : Oui, tu as raison. Une fois la Proposition~\ref{prop:closedness-diagram} démontrée, Proposition~\ref{prop:attainment-extrema} peut être obtenue comme conséquence des arguments de compacité. Pour le diamètre fixé, la compacité suit directement du théorème de sélection de Blaschke. Pour \(h(K)=h_*\), il faut d'abord borner le diamètre; cela découle du Lemme~\ref{lem:two-sided-cheeger-diameter}, puisque
%\[
%h_*\ge c_m D(K)^{1/(m-1)},
%\qquad\text{donc}\qquad
%D(K)\le \left(\frac{h_*}{c_m}\right)^{m-1}.
%\]
%On peut alors appliquer le même argument de compacité. Je préfère néanmoins
%garder la proposition sous sa forme actuelle, afin de regrouper les propriétés topologiques du diagramme dans une sous-section séparée.
%}

\begin{proof}
\setcounter{proofstep}{0}
We begin with closedness. Let $(d_n,c_n)$ be a sequence in $\mathcal{D}_m$ converging in $\mathbb{R}^2$ to some point $(d,c)$. By the definition of the diagram, for each $n\ge 1$ there exists a body $K_n\in\mathcal{K}_1^m$ such that
$D(K_n)=d_n$ and $h(K_n)=c_n$.

Since the sequence $(D(K_n))$ is bounded and $|K_n|=1$ for every $n$, the family $(K_n)$ is uniformly bounded up to translations. We then may assume that all bodies are contained in a common compact ball of $\mathbb{R}^m$. By Blaschke's selection theorem, there exists a subsequence, still denoted by $(K_n)$, and a convex body $K\in\mathcal{K}^m$ such that $K_n\underset{n\rightarrow+\infty}{\longrightarrow} K$ in the Hausdorff metric; see, For example, ~\cite[Theorem.~1.8.7]{Schneider2014}.

Since the volume is continuous on $\mathcal{K}^m$ with respect to Hausdorff convergence, we obtain $|K|=1$, hence $K\in\mathcal{K}_1^m$. Moreover, the diameter is continuous on $\mathcal{K}^m$, so $D(K)=\lim_{n\to\infty}D(K_n)=d$.

At this point we use the continuity of the Cheeger constant on $\mathcal{K}_1^m$ with respect to Hausdorff convergence see~\cite[Prop.~3.1]{Parini2017}. Therefore
$h(K)=\lim_{n\to\infty}h(K_n)=c$, which yields  $(d,c)=(D(K),h(K))\in\mathcal{D}_m$, 
which proves that $\mathcal{D}_m$ is closed.
\end{proof}
%\subsection{Simple connectedness of the diagram}

We are now ready to prove the simple connectedness of the diagram. To do so, we show that every point between the graphs of \(f\) and \(g\) belongs to
\(\mathcal D_m\). The proof uses normalized Minkowski interpolation and a
winding-number argument.
\begin{theorem}
\label{thm:simple-connectedness-convex-bodies}
Let \(B\subset\mathbb R^m\) be the Euclidean ball of volume \(1\). For every \(m\ge2\), the Blaschke--Santaló diagram
\[
\mathcal D_m
=
\bigl\{(x,y)\in\mathbb R^2:\ x\ge D(B),\ f(x)\le y\le g(x)\bigr\}
\]
is simply connected.
\end{theorem}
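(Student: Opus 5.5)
The main task is the representation formula. Simple connectedness then follows at once: once $\mathcal D_m=\{x\ge d_0,\ f(x)\le y\le g(x)\}$ with $f,g$ continuous (Proposition~\ref{prop:boundary-functions}) and $f\le g$, the set deformation retracts onto the graph of $f$ via $(x,y)\mapsto(x,(1-s)y+sf(x))$. The graph of $f$ is homeomorphic to the half-line $[d_0,+\infty)$, which is contractible.

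The inclusion $\mathcal D_m\subset\{f\le y\le g\}$ is immediate from the definitions of $f$ and $g$ and the isodiametric inequality. For the reverse inclusion, fix $x_0>d_0$. By Proposition~\ref{prop:attainment-extrema}, choose $K_0,K_1\in\mathcal K_1^m$ with $D(K_i)=x_0$, $h(K_0)=f(x_0)$ and $h(K_1)=g(x_0)$. We need a family of unit-volume convex bodies of diameter exactly $x_0$ whose Cheeger constants sweep $[f(x_0),g(x_0)]$. The case $x_0=d_0$ is trivial, since $f(d_0)=g(d_0)=h(B)$.

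The natural path is the normalized Minkowski interpolation $M_t:=|(1-t)K_0+tK_1|^{-1/m}\bigl((1-t)K_0+tK_1\bigr)$. It lies in $\mathcal K_1^m$ by \eqref{eq:minko}, and it is Hausdorff continuous in $t$. So $t\mapsto(D(M_t),h(M_t))$ is a continuous curve in $\mathcal D_m$ joining $(x_0,f(x_0))$ to $(x_0,g(x_0))$, using the continuity of $D$, $h$ and the volume. The difficulty is that $D(M_t)$ need not stay equal to $x_0$. The curve may wander left or right, so the intermediate value theorem cannot be applied to $h(M_t)$ directly.

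My plan is a winding-number (topological degree) argument. Suppose some point $p=(x_0,y_0)$ with $f(x_0)<y_0<g(x_0)$ is missing from $\mathcal D_m$. Then build a closed loop in $\mathcal D_m$ that has nonzero winding number around $p$, and contract it inside $\mathcal D_m$. Such a contraction cannot avoid $p$, which gives the contradiction. I will construct the loop and the contraction simultaneously as a continuous map $\Phi:[0,1]^2\to\mathcal D_m$. The boundary of the square will be mapped as follows:
\begin{itemize}
\item the left edge goes through a path of minimizers-like bodies staying on or below the height $y_0$ (for instance, continuous Steiner symmetrization from $K_0$ toward a ball, then a dilation along one axis as in Proposition~\ref{prop:first-order-lower-boundary});
\item the right edge does the same starting from $K_1$;
\item one remaining edge is the Minkowski interpolation $M_t$;
\item the edge passing near the ball lies entirely in the region $x<x_0$.
\end{itemize}

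A simpler alternative I would try first avoids the square altogether. Define $\Psi(t,s)=\bigl(D(N_{t,s}),h(N_{t,s})\bigr)$, where $N_{t,s}$ is the normalized interpolation between $M_t$ and the ball $B$ with parameter $s$. The edges are then:
\begin{itemize}
\item for $s=1$, every point is $(d_0,h(B))$;
\item for $s=0$, the edge is the curve $M_t$;
\item for $t=0,1$, the edges are the interpolations from $K_0$ and from $K_1$ to $B$.
\end{itemize}
On the edge $t=0$, the interpolation from $K_0$ to $B$, I need the curve to avoid the vertical segment above $p$'s level at $x=x_0$. The reason it should: at diameter $x_0$, the point reached is $(x_0,f(x_0))$, which lies below $y_0$, and for other parameter values the diameter should be strictly less than $x_0$. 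Similarly the edge $t=1$ stays off the segment below $p$. The main obstacle is exactly this control. I must show that the diameter along interpolations toward the ball does not exceed $x_0$, or otherwise keep the relevant boundary arcs from crossing the horizontal line through $p$ on the wrong side.

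If the diameter estimate fails, I would replace interpolation toward $B$ by dilation toward $B$. Concretely, I would use $K\mapsto$ normalized $K+\lambda B_1$, as in Lemma~\ref{lem:local-extrema}, whose diameter is computed explicitly. Its Cheeger constant moves monotonically toward $h(B)$ in the scale-invariant sense, and I would then verify the winding number of the boundary loop around $p$ by tracking which quadrants about $p$ each edge occupies.
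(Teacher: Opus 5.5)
Your reduction is fine: the inclusion $\mathcal D_m\subset E$ holds, and simple connectedness of $E$ follows by retracting onto the graph of $f$. The winding-number step for $E\subset\mathcal D_m$, however, cannot work as set up.

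The claim that the Minkowski path ``may wander left or right'' is false. The diameter is subadditive, so $D\bigl((1-t)K_0+tK_1\bigr)\le x_0$, and Brunn--Minkowski gives $|(1-t)K_0+tK_1|\ge 1$. Hence $D(M_t)\le x_0$ for all $t$. The same bound holds for your other edges. The normalized interpolations toward $B$ have diameter at most $(1-s)x_0+sd_0\le x_0$. The normalized parallel bodies of $K+\lambda B_1$ have diameter at most $(D(K)+2\lambda)/(1+\lambda\omega_m^{1/m})\le D(K)$, because $D(K)\ge d_0=2\omega_m^{-1/m}$.

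So the boundary loop of $\Psi$ lies in the closed half-plane $\{x\le x_0\}$, while $p=(x_0,y_0)$ sits on its boundary line. The loop never meets the open ray $\{(x,y_0):x>x_0\}$. Its winding number about $p$ is therefore $0$ whether or not $p\in\mathcal D_m$, and no contradiction arises. More generally, any map $\Phi:[0,1]^2\to\mathcal D_m$ whose boundary winds around $p$ needs a boundary arc \emph{in} $\mathcal D_m$ crossing that ray. That means a continuous family of bodies of diameter $>x_0$ passing through Cheeger level $y_0$, which is a statement of the same nature as the one you want to prove. Your suggested edges (symmetrization toward $B$, then stretching $B$) give no control over where they cross the level $y_0$.

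The missing idea, which the paper supplies, is to let part of the loop lie outside the diagram, strictly to the right of the missing point. The contraction inside $\mathcal D_m$ is then replaced by a continuation argument.

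Since $\mathcal D_m$ is closed, a missing point $z_0$ has a disk $B(z_0,r)\subset E\setminus\mathcal D_m$. The paper takes extremal bodies at the shifted diameter $\tilde d_0=x_0+r/2$. Their normalized Minkowski path lies in $\mathcal D_m\cap\{x\le\tilde d_0\}$, hence off the disk. It is closed by the auxiliary vertical segment at $x=\tilde d_0$, which is the only part of the loop crossing the right ray from $z_0$, so the index is $\pm1$.

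The paper then pushes the common diameter $d\to\infty$ within the classes $\mathcal C_R(d)$. These impose a common diametral segment $[0,d]e_1$ and a cylinder constraint, so that the normalized path has abscissae in $[d/A_R,d]$, with $A_R:=(\omega_{m-1}R^{m-1})^{1/m}$. The tools are the stretchings $T_\lambda$, Blaschke selection, and uniform convergence of the curves. Along the way, the Minkowski part stays in $\mathcal D_m$, hence off the disk, and the vertical part stays at $x=d>x_0$, so the index cannot change. Yet the index is $0$ once $d/A_R>x_0+r$.

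Without some device of this kind, your argument does not prove $E\subset\mathcal D_m$.
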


\begin{proof}
%__%\proofstep{Construction judicious  closed curves.}{step:closed-curves}
\textbf{Step 1: Construction judicious  closed curves.}
Let $K,L\in \mathcal{K}_1^m$ satisfying $D(K)=D(L)$. For $t\in[0,1]$, we consider the Minkowski interpolation
$M_t:=(1-t)K+tL$
and normalize it with respect to its volume as follows
$
\widetilde M_t:=|M_t|^{-1/m}M_t \in \mathcal{K}_1^m$.
% Since Minkowski addition preserves convexity and $|\widetilde M_t|=1$ by construction, one has
% \[
% \widetilde M_t\in \mathcal{K}_1^m
% \qquad\text{for every }t\in[0,1].
% \]

We then define a path in the Blaschke--Santal\'o diagram  \(\mathcal{D}_m \) by
$\bigl\{(D(\widetilde M_t),\,h(\widetilde M_t)):\ t\in[0,1]\bigr\}$ that
we close with an auxiliary vertical segment in the common abscissa \(D(K)=D(L)\). This segment is introduced solely to obtain a closed planar curve; it is not assumed to be contained in the Blaschke--Santal\'o diagram. The resulting curve will be used below through its winding number with respect to a relevant point that will be introduced later. More precisely, define
\[
\sigma_{K,L}:t\in[0,2]\longmapsto
\begin{cases}
\bigl(D(\widetilde M_t),\,h(\widetilde M_t)\bigr), & t\in[0,1],\\[4pt]
\bigl(D(K),\,(2-t)\,h(L)+(t-1)\,h(K)\bigr), & t\in[1,2].
\end{cases}
\]
%Thus \(\sigma_{K,L}\) is a closed continuous curve in \(\mathbb R^2\).

% Indeed,
% \[
% \sigma_{K,L}(0)=(D(K),h(K)),
% \qquad
% \sigma_{K,L}(1)=(D(L),h(L)),
% \qquad
% \sigma_{K,L}(2)=(D(K),h(K)),
% \]
% and the assumption $D(K)=D(L)$ ensures that the two branches match continuously at $t=1$.

% It remains to justify continuity. The map $t\mapsto M_t$ is continuous in the Hausdorff metric, since for every $u\in \mathbb S^{m-1}$ one has
% \[
% h_{M_t}(u)=(1-t)h_K(u)+t\,h_L(u).
% \]
% Together with the continuity of $t\mapsto |M_t|$, this implies the continuity of $t\mapsto \widetilde M_t$ in $\mathcal K_1^m$. Since both the diameter and the Cheeger constant are continuous on $\mathcal K_1^m$, the first branch of $\sigma_{K,L}$ is continuous on $[0,1]$, while the second branch is manifestly continuous on $[1,2]$. Therefore $\sigma_{K,L}$ is a continuous closed curve in $\mathcal D_m$.

%__%\proofstep{Uniform continuity of the closed curves.}{step:uniform-convergence-curves}
\textbf{Step 2. Uniform continuity of the closed curves.}
Let $K,L\in\mathcal K_1^m$ be such that $D(K)=D(L)=d_0$, and let $(K_n)$ and $(L_n)$ be sequences in $\mathcal K_1^m$ converging to $K$ and $L$, respectively, in the Hausdorff metric, with
$D(K_n)=D(L_n)\ \text{for every }n\ge 1.$

We show that the associated closed curves $\sigma_{K_n,L_n}$ converge uniformly to $\sigma_{K,L}$ on $[0,2]$ as $n\to +\infty$. More precisely, for every $\varepsilon>0$, there exists $N_\varepsilon\in\mathbb N$ such that
\[
\sup_{t\in[0,2]}
\bigl\|\sigma_{K_n,L_n}(t)-\sigma_{K,L}(t)\bigr\|
\le \varepsilon
\qquad\text{for all }n\ge N_\varepsilon.
\]
 
  For $t\in[0,1]$, set $M_t^n:=(1-t)K_n+tL_n$ and $M_t:=(1-t)K+tL$
and consider their volume-normalized versions $\widetilde M_t^n:=|M_t^n|^{-1/m}M_t^n$ and 
$\widetilde M_t:=|M_t|^{-1/m}M_t$. We then have
\[
\|\sigma_{K_n,L_n}(t)-\sigma_{K,L}(t)\|
\le
|D(\widetilde M_t^n)-D(\widetilde M_t)|
+
|h(\widetilde M_t^n)-h(\widetilde M_t)|.
\]

We estimate the two terms separately. Since the diameter is $2$-Lipschitz with respect to the Hausdorff distance, one has
\[
|D(\widetilde M_t^n)-D(\widetilde M_t)|
\le
2\,d_H(\widetilde M_t^n,\widetilde M_t).
\]

We begin by estimating the Hausdorff distance between $\widetilde M_t^n$ and $\widetilde M_t$. Since
$h_{\widetilde M_t^n}=|M_t^n|^{-1/m}h_{M_t^n}$ and $
h_{\widetilde M_t}=|M_t|^{-1/m}h_{M_t}$, 
we may write, for every $u\in\mathbb S^{m-1}$,
\[
h_{\widetilde M_t^n}(u)-h_{\widetilde M_t}(u)
=
|M_t^n|^{-1/m}\bigl(h_{M_t^n}(u)-h_{M_t}(u)\bigr)
+
\bigl(|M_t^n|^{-1/m}-|M_t|^{-1/m}\bigr)\,h_{M_t}(u).
\]

Taking the supremum over $u\in\mathbb S^{m-1}$, we obtain
\[
d_H(\widetilde M_t^n,\widetilde M_t)
\le
|M_t^n|^{-1/m}\,\|h_{M_t^n}-h_{M_t}\|_\infty
+
\|h_{M_t}\|_\infty\,\bigl||M_t^n|^{-1/m}-|M_t|^{-1/m}\bigr|.
\]

The first term is controlled by using the linearity of the support function with respect to Minkowski sums, i.e.,
$h_{M_t^n}=(1-t)h_{K_n}+t\,h_{L_n}$, and 
$h_{M_t}=(1-t)h_K+t\,h_L$. Hence,
\[
\|h_{M_t^n}-h_{M_t}\|_\infty
\le
(1-t)\|h_{K_n}-h_K\|_\infty+t\|h_{L_n}-h_L\|_\infty
\le
\|h_{K_n}-h_K\|_\infty+\|h_{L_n}-h_L\|_\infty.
\]

We next estimate the second term. By the Brunn--Minkowski inequality, we have
\[
|M_t^n|^{1/m}\ge (1-t)|K_n|^{1/m}+t|L_n|^{1/m}=1,
\ \text{and},\ 
|M_t|^{1/m}\ge (1-t)|K|^{1/m}+t|L|^{1/m}=1.
\]
% In particular,
%\[
%|M_t^n|^{-1/m}\le 1.
%\]

Moreover, by applying the mean value theorem to the function $x\mapsto x^{-1/m}$, we obtain
\[
\bigl||M_t^n|^{-1/m}-|M_t|^{-1/m}\bigr|
\le
{\scriptstyle \sup\limits_{x\ge 1}\left|-\frac1m x^{-1/m-1}\right|}
\cdot\bigl||M_t^n|-|M_t|\bigr|
\le
\frac1m\,\bigl||M_t^n|-|M_t|\bigr|,
\]
where we have used the fact that\(|M_t^n|, |M_t|\ge 1\) for every \(t\in[0,1]\).

It remains to estimate the difference between the volumes. Expanding $|M_t^n|$ and $|M_t|$ by Minkowski's polynomial formula \eqref{eq:minko}, we obtain 
\[
\bigl||M_t^n|-|M_t|\bigr|
\le
\sum_{k=0}^m \binom{m}{k}(1-t)^{m-k}t^k\,
\bigl|W_k(K_n,L_n)-W_k(K,L)\bigr|
\le \sum_{k=0}^m \binom{m}{k}\,
\bigl|W_k(K_n,L_n)-W_k(K,L)\bigr|.
\]

Using 
\[
\|h_{M_t}\|_\infty
=
\|(1-t)h_K+t h_L\|_\infty
\le
(1-t)\|h_K\|_\infty+t\|h_L\|_\infty
\le
\|h_K\|_\infty+\|h_L\|_\infty,
\]
we obtain
\begin{equation}\label{eq:uniform-hausdorff-bound}
\begin{aligned}
d_H(\widetilde M_t^n,\widetilde M_t)
&\le
d_H(K_n, K)+d_H( L_n, L) \\
&\quad
+\frac{\|h_K\|_\infty+\|h_L\|_\infty}{m}
\sum_{k=0}^m \binom{m}{k}
\bigl|W_k(K_n,L_n)-W_k(K,L)\bigr|.
\end{aligned}
\end{equation}

Since $K_n\underset{n\rightarrow+\infty}{\longrightarrow} K$ and $L_n\underset{n\rightarrow+\infty}{\longrightarrow} L$ in the Hausdorff metric, the mixed volumes $W_k(K_n,L_n)$ converge to $W_k(K,L)$; for every $k\in \llbracket 0,m \rrbracket$ ; see, e.g.,~\cite[Section~5.1]{Schneider2014}. It follows that
\begin{equation}\label{eq:uniform-hausdorff-convergence}
\sup_{t\in[0,1]} d_H(\widetilde M_t^n,\widetilde M_t)\underset{n\rightarrow+\infty}{\longrightarrow} 0.
\end{equation}

It remains to control the term
$\bigl|h(\widetilde M_t^n)-h(\widetilde M_t)\bigr|$. Set
$
d_0:=D(K)=D(L)
$.
Since $K_n\to K$ and $L_n\to L$ in the Hausdorff metric, we may assume, for $n$ sufficiently large, that
$D(K_n),\,D(L_n)\le d_0+1$.

For every $t\in[0,1]$, the subadditivity of the diameter under Minkowski addition gives
\[
D(M_t^n)\le (1-t)D(K_n)+tD(L_n)\le d_0+1,
\]
and similarly
\[
D(M_t)\le (1-t)D(K)+tD(L)=d_0.
\]

Since $|M_t^n|^{1/m},\,|M_t|^{1/m}\ge 1$, by the Brunn--Minkowski inequality, it follows that
\[
D(\widetilde M_t^n)\le D(M_t^n)\le d_0+1,
\qquad
D(\widetilde M_t)\le D(M_t)\le d_0+1
\]
for every $t\in[0,1]$ and all $n$ sufficiently large.

Since \(|\widetilde M_t^n|=|\widetilde M_t|=1\) and
\(
D(\widetilde M_t^n),\,D(\widetilde M_t)\le d_0+1,
\)
the inradius estimate obtained in Lemma  \ref{lem:lower_r_d_A} gives
$$
r(\widetilde M_t^n),\,r(\widetilde M_t)
\ge
\frac{2^{m-1}}{m\,\omega_m\,(d_0+1)^{m-1}}=:\rho_0.
$$

Applying Lemma~\ref{lem:cox-monotone-homogeneous}, we obtain
\[
|h(\widetilde M_t^n)-h(\widetilde M_t)|
\le
\frac{m}{\rho_0^2}\,d_H(\widetilde M_t^n,\widetilde M_t).
\]

Combining this estimate with \eqref{eq:uniform-hausdorff-bound}, we infer that, for every $t \in [0,1]$
\[
|h(\widetilde M_t^n)-h(\widetilde M_t)|
\le
\frac{m}{\rho_0^2}\,\Bigg(
d_H(K_n, K)+d_H( L_n, L) \\
+\frac{\|h_K\|_\infty+\|h_L\|_\infty}{m}
\sum_{k=0}^m \binom{m}{k}
\bigl|W_k(K_n,L_n)-W_k(K,L)\bigr|\Bigg).
\]

Since the right-hand side is independent of $t$, we conclude that
\[
\sup_{t\in[0,1]} |h(\widetilde M_t^n)-h(\widetilde M_t)|\underset{n\rightarrow+\infty}{\longrightarrow} 0.
\]

Combining this with the uniform convergence of \eqref{eq:uniform-hausdorff-convergence}, we obtain
\[
\sup_{t\in[0,1]}
\bigl\|\sigma_{K_n,L_n}(t)-\sigma_{K,L}(t)\bigr\|
\underset{n\rightarrow+\infty}{\longrightarrow} 0.
\]

 For the range $t\in [1,2]$, the claim follows directly from the parametrization of the auxiliary vertical branch, i.e.,
\[
\sigma_{K_n,L_n}(t)
=
\bigl(D(K_n),\,(2-t)\,h(L_n)+(t-1)\,h(K_n)\bigr),
\]
and
\[
\sigma_{K,L}(t)
=
\bigl(D(K),\,(2-t)\,h(L)+(t-1)\,h(K)\bigr).
\]

We write
\[
\begin{aligned}
\bigl\|\sigma_{K_n,L_n}(t)-\sigma_{K,L}(t)\bigr\|
&\le
|D(K_n)-D(K)| 
+(2-t)\,|h(L_n)-h(L)|
+(t-1)\,|h(K_n)-h(K)|\\
&\leq |D(K_n)-D(K)|
+ |h(K_n)-h(K)| +|h(L_n)-h(L)|.
\end{aligned}
\]

The right-hand side is independent of $t$ and tends to zero as $n\to\infty$, by convergence $K_n\underset{n\rightarrow+\infty}{\longrightarrow} K$, $L_n\underset{n\rightarrow+\infty}{\longrightarrow} L$, and the continuity of the diameter and the Cheeger constant in $\mathcal K_1^m$. 

We then have shown the claimed uniform continuity, i.e.,
\[
\sup_{t\in[0,2]}
\bigl\|\sigma_{K_n,L_n}(t)-\sigma_{K,L}(t)\bigr\|
\underset{n\rightarrow+\infty}{\longrightarrow} 0.
\]

%__%\proofstep{Cylindrical classes and closedness.}{step:cylindrical-classes}

\textbf{Step 3. A special class of sets.}

For \(R>0\) and \(d>0\), define
\[
\mathcal C_R(d):=
\left\{
K\in\mathcal K_1^m:
D(K)=d,\ 0\in K,\ de_1\in K,\ 
K\subset [0,d]\times B_{m-1}\left(0,Rd^{-1/(m-1)}\right)
\right\}.
\]
Here \(e_1\) denotes the first vector of the canonical basis of \(\mathbb R^m\),
and \(B_{m-1}(0,r)\) denotes the Euclidean ball of radius \(r\) in
\(\mathbb R^{m-1}\). The conditions \(0\in K\) and \(de_1\in K\) fix a
diametral segment of \(K\). Moreover,
\[
\left|
[0,d]\times B_{m-1}\left(0,Rd^{-1/(m-1)}\right)
\right|
=
\omega_{m-1}R^{m-1}.
\]

We record the following closedness property. Let \(R>0\), let \(d_n\to d>0\),
and let $K_n\in\mathcal C_R(d_n)$. Assume that \(K_n\to K\) in the Hausdorff metric. Then $K\in\mathcal C_R(d)$.

Indeed, the limit \(K\) is in the class $K_1^m$. By the continuity of the volume
and the diameter with respect to the Hausdorff convergence, 
\[
|K|=\lim_{n\to\infty}|K_n|=1,
\qquad
D(K)=\lim_{n\to\infty}D(K_n)=\lim_{n\to\infty}d_n=d.
\]

 Moreover, since \(0\in K_n\) for every
\(n\), by the Hausdorff convergence, we also have $0\in K$. Similarly, since \(d_ne_1\in K_n\) and \(d_ne_1\to de_1\), we obtain
$de_1\in K$.

Let \(x\in K\). By the Hausdorff convergence, there exist \(x_n\in K_n\) such that
\(x_n\to x\). Write
$
x_n=(x_{n,1},x_n')$ and
$x=(x_1,x')$, with \(x_n',x'\in\mathbb R^{m-1}\). Since \(K_n\in\mathcal C_R(d_n)\), we have
$
0\le x_{n,1}\le d_n$ and $
|x_n'|\le R d_n^{-1/(m-1)}$. Passing to the limit, we show that 
$0\le x_1\le d$ and $
|x'|\le R d^{-1/(m-1)}$, which means that 
$x\in [0,d]\times B_{m-1}\left(0,Rd^{-1/(m-1)}\right)$.

Since the choice of \(x\in K\) was arbitrary, we get the inclusion
$
K\subset [0,d]\times B_{m-1}\left(0,Rd^{-1/(m-1)}\right)$, which means that \(K\in\mathcal C_R(d)\).

%__%\proofstep{Stability under volume-preserving affine images.}{step:volume-preserving-affine-images}

\textbf{Step 4. Behavior under volume-preserving affine maps.}

Let \(R>0\), \(d>0\), \(K\in\mathcal C_R(d)\), and \(\lambda\ge1\). Define
\(T_\lambda(x_1,x'):=(\lambda x_1,\lambda^{-1/(m-1)}x')\), with
\(x'\in\mathbb R^{m-1}\). Let us prove that \(T_\lambda K\in\mathcal C_R(\lambda d)\).

Since \(\det T_\lambda=1\), one has \(|T_\lambda K|=|K|=1\). Moreover,
\(T_\lambda\) sends the segment \([0,d]e_1\) onto
\([0,\lambda d]e_1\), dilates the direction \(e_1\) by the factor \(\lambda\),
and contracts every direction orthogonal to \(e_1\) by the factor
\(\lambda^{-1/(m-1)}\). Hence \(D(T_\lambda K)=\lambda d\).

Finally, since \(K\subset [0,d]\times B_{m-1}(0,R\cdot d^{-1/(m-1)})\), we have the inclusion
\[
T_\lambda K
\subset
[0,\lambda d]\times
B_{m-1}\left(0,R(\lambda d)^{-1/(m-1)}\right),
\]
which yields \(T_\lambda K\in\mathcal C_R(\lambda d)\).

%__%\proofstep{Diameter control along the path induced by Minkowski interpolation.}{step:diameter-control-cylindrical-classes}

\textbf{Step 5. Diameter control along the path induced by Minkowski interpolation.}

Let \(R>0\), \(d>0\), and \(K,L\in\mathcal C_R(d)\). For \(t\in[0,1]\), set
$M_t:=(1-t)K+tL$ and $
\widetilde M_t:=|M_t|^{-1/m}M_t$. Since \(0,de_1\in K\cap L\), and since the diameter is subadditive with respect
to Minkowski addition, one has \(D(M_t)=d\) for every \(t\in[0,1]\).

Moreover, since both \(K\) and \(L\) are contained in the cylinder
$
[0,d]\times B_{m-1}\left(0,Rd^{-1/(m-1)}\right),$
the same holds for the interpolation \(M_t\). Consequently,
$
|M_t|\le \omega_{m-1}R^{m-1}$. After normalization by volume, we can write
\[
D(\widetilde M_t)
=
|M_t|^{-1/m}D(M_t)
=
\frac{d}{|M_t|^{1/m}}\ge \frac{d}{\bigl(\omega_{m-1}R^{m-1}\bigr)^{1/m}}.
\]

On the other hand, the Brunn--Minkowski inequality,
\[
|M_t|^{1/m}
\ge
(1-t)|K|^{1/m}+t|L|^{1/m}
=
1,
\]
implies that \(D(\widetilde M_t)\le d\). 

Therefore, for every \(t\in[0,1]\),
\[
\frac{d}{\bigl(\omega_{m-1}R^{m-1}\bigr)^{1/m}}
\le
D(\widetilde M_t)
\le
d.
\]

In particular, the first coordinate of every point of the path induced by the
normalized Minkowski interpolation belongs to the interval $\left[
\bigl(\omega_{m-1}R^{m-1}\bigr)^{-1/m}d,
d
\right]$.

%__%\proofstep{Conclusion via the topological index.}{step:conclusion-via-topological-index} \label{step:conclusion-via-topological-index}

\textbf{Step 6. Conclusion via the topological index. }

We recall that for each diameter \(d\ge D(B)\), 
\[
f(d):=\inf\{h(K):K\in\mathcal K_1^m,\ D(K)=d\},
\qquad
g(d):=\sup\{h(K):K\in\mathcal K_1^m,\ D(K)=d\},
\]
and set
$E:=\{(x,y)\in\mathbb R^2:\ x\ge D(B),\ f(x)\le y\le g(x)\}$.

We have already shown that \(\mathcal D_m\) is a closed subset of \(E\). We now prove that \(\mathcal D_m=E\). This will imply that \(\mathcal D_m\) is simply connected.
Suppose, for contradiction, that there exists \(z_0=(x_0,y_0)\in E\setminus\mathcal D_m\). Since the extrema defining \(f\) and \(g\) are attained, the boundary points \((x,f(x))\) and \((x,g(x))\) belong to \(\mathcal D_m\). By the continuity of \(f\) and \(g\), it follows that \(z_0\) is an interior point of \(E\). Since \(\mathcal D_m\) is closed, there exists \(r>0\) such that \(B(z_0,r)\subset E\setminus\mathcal D_m\).

Set \(\widetilde{d_0}:=x_0+\frac r2\).
Since \(B(z_0,r)\subset E\), one has \((\widetilde{d_0},y_0)\in E\), and therefore
$f(\widetilde{d_0})\le y_0\le g(\widetilde{d_0})$. We then choose \(L_0,K_0\in\mathcal K_1^m\) such that $D(L_0)=D(K_0)=\widetilde{d_0},\  h(L_0)=f(\widetilde{d_0})\ \text{and}\ h(K_0)=g(\widetilde{d_0})$. After applying suitable rigid motions, we may assume that chosen diametral segments of \(L_0\) and \(K_0\) both coincide with \([0,\widetilde d_0]e_1\). Choose \(R_{L_0},R_{K_0}>0\) such that
\[
L_0,K_0\subset [0,\widetilde d_0]\times B_{m-1}\left(0,R_{L_0}\widetilde {d_0}^{-1/(m-1)}\right).
\]

Take $R_0:=\max\{R_{L_0},R_{K_0}\}$. We then have
$L_0,K_0\in\mathcal C_{R_0}(\widetilde d_0)$. Also, if \(L,K\in\mathcal C_{R_0}(d)\), we denote by \(\sigma_{L,K}\) the associated closed curve.

Let $A_{R_0}:=\bigl(\omega_{m-1}R_0^{m-1}\bigr)^{1/m}$. By \textbf{Step 5},
every point of \(\sigma_{L,K}\) has first coordinate in the interval
$[d/A_{R_0},\,d]$. In particular, if \(d\) is sufficiently large, then \(\sigma_{L,K}\) lies entirely in the half-plane \(x>x_0+r\). Hence
\(
\operatorname{ind}(\sigma_{L,K},z_0)=0.
\), and therefore the topological index (or winding number) of \(\sigma_{L,M}\) with respect to \(z_0\) is zero, that is,
$\operatorname{ind}(\sigma_{L,M},z_0)=0$.

We now define
\[
I_{R_0}:=
\Bigl\{
d\ge \widetilde d_0:\ \exists\,L,K\in\mathcal C_{R_0}(d)
\text{ with } h(L)\le h(K)
\text{ and } \operatorname{ind}(\sigma_{L,K},z_0)\neq 0
\Bigr\}.
\]

The set \(I_{R_0}\) is nonempty. Indeed, for \(d=\widetilde d_0\), the auxiliary vertical segment in the definition of \(\sigma_{L_0,K_0}\) contains the point \((\widetilde d_0,y_0)\in B(z_0,r)\). On the other hand, the non-vertical part of the curve, namely the image of \(t\in[0,1]\mapsto (D(\widetilde M_t),h(\widetilde M_t))\), is contained in the Blaschke--Santal\'o diagram \(\mathcal D_m\), and is therefore disjoint from \(B(z_0,r)\). Moreover, this part of the curve is contained in the strip
$\{(x,y);\ {d_0}/{A_{R_0}}\le x\le \widetilde d_0\}$. Thus, the curve cannot close by passing through the right-hand side of the disk, and the resulting closed curve has nonzero topological index with respect to \(z_0\). Hence
\(
\operatorname{ind}(\sigma_{L_0,K_0},z_0)\neq 0.
\)
Therefore \(\widetilde d_0\in I_{R_0}\), and so \(I_{R_0}\neq\varnothing\). On the other hand, by the diameter bound established above, \(I_{R_0}\) is bounded from above. We may therefore set $d^{*}:=\sup I_{R_0}$.

We now distinguish two cases.

\medskip

\noindent\textbf{Case 1: \(d^{*}\notin I_{R_0}\).}

In this case, there exists a sequence \(d_n\nearrow d^{*}\) with \(d_n\in I_{R_0}\), together with two sequences
\(
L_n,K_n\in\mathcal C_{R_0}(d_n)
\)
such that
\(
z_0\in\operatorname{int}(\sigma_{L_n,K_n}).
\)

Since the diameters \(d_n\) are bounded above by \(d^{*}\), the Blaschke selection theorem yields, after extraction of a common subsequence still denoted by \(L_n\) and \(K_n\), such that 
\(
L_n\underset{n\rightarrow+\infty}{\longrightarrow} L,
\;\text{and}\  
K_n\underset{n\rightarrow+\infty}{\longrightarrow} K
\)
for some \(L,K\in\mathcal K_1^m\). 

By the closedness property proved in \textbf{Step 3}, and since the diameter is continuous under Hausdorff convergence, it follows that
$L,K\in\mathcal C_{R_0}(d^{*})$.
Hence, by the uniform convergence theorem established earlier, the associated curves \(\sigma_{L_n,K_n}\) converge uniformly to \(\sigma_{L,K}\).

Since \(z_0\notin \operatorname{int}(\sigma_{L,K})\), the continuity of the topological index under uniform convergence of closed curves away from the base point implies that
\(
\operatorname{ind}(\sigma_{L_n,K_n},z_0)=\operatorname{ind}(\sigma_{L,K},z_0)
\)
for all \(n\). 

On the other hand, since \(z_0\in\operatorname{int}(\sigma_{L_n,K_n})\), one has
\(
\operatorname{ind}(\sigma_{L_n,K_n},z_0)\neq 0,
\)
whereas \(d^{*}\notin I_{R_0}\) implies
\(
\operatorname{ind}(\sigma_{L,K},z_0)=0.
\)
This is a contradiction.

\medskip

\noindent\textbf{Case 2: \(d^{*}\in I_{R_0}\).}

By the definition of \(I_{R_0}\), there exist
\(
L,K\in\mathcal C_{R_0}(d^{*})
\)
such that
\(
z_0\in\operatorname{int}(\sigma_{L,K}).
\)

We now apply the affine perturbation argument. After suitable rigid motions, we may assume that the chosen diameter segments of \(L\) and \(K\) both coincide with the segment \([0,d^{*}]e_1\). For \(n\ge1\), let \(T_n\in SL(m)\) be defined by
\[
(T_n)_{11}=\frac{n+1}{n},
\qquad
(T_n)_{ii}=\Bigl(\frac{n}{n+1}\Bigr)^{1/(m-1)}\quad\text{for }i\ge2,
\qquad
(T_n)_{ij}=0\quad\text{for }i\neq j,
\]
and set
$L_n:=T_n(L)$ and $K_n:=T_n(K)$. Since \(T_n\underset{n\rightarrow+\infty}{\longrightarrow} I\), we have
$L_n\underset{n\rightarrow+\infty}{\longrightarrow} L$ and $K_n\underset{n\rightarrow+\infty}{\longrightarrow} K$
in the Hausdorff metric. Moreover, \(\det T_n=1\), hence
\(
|L_n|=|K_n|=1.
\)

Set $d_n:=\Bigl(1+\frac1n\Bigr)d^{*}$.
Since \(T_n\) stretches the in the direction of \(e_1\) by the factor \((n+1)/n\) and contracts the remaining orthogonal directions, we have
$D(L_n)=D(K_n)=d_n>d^{*}$.

By \textbf{Step 4},
$L_n,K_n\in\mathcal C_{R_0}(d_n)$. 
Again, by the uniform convergence of the associated curves, \(\sigma_{L_n,K_n}\) converge uniformly to \(\sigma_{L,K}\) on \([0,2]\). the continuity of the topological index yields, for all sufficiently large \(n\),
$\operatorname{ind}(\sigma_{L_n,K_n},z_0)=\operatorname{ind}(\sigma_{L,K},z_0)$.
Whereas,
$\operatorname{ind}(\sigma_{L_n,K_n},z_0)= 0
\ \text{and}\ 
\operatorname{ind}(\sigma_{L,K},z_0)\neq0$,
which is a contradiction.

The contradiction in both cases (\textbf{Case 1} and \textbf{Case 2}) shows that \(E\setminus\mathcal D_m=\varnothing\). Hence \(\mathcal D_m=E\). Since \(E\) is the region between the graphs of the continuous functions \(f\) and \(g\) on \([D(B),+\infty)\), it is simply connected. 
\end{proof}

The representation of \(\mathcal D_m\) as the region between the graphs of the functions
\(f\) and \(g\) has the following immediate consequence:
\begin{corollary}
The diagram \(\mathcal D_m\) is vertically convex. If \(m=2\), then the diagram
\(\mathcal D_2\) is also horizontally convex.
\end{corollary}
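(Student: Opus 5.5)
The plan is to read both claims directly off the representation $\mathcal D_m=E=\{(x,y):\ x\ge D(B),\ f(x)\le y\le g(x)\}$ from Theorem~\ref{thm:simple-connectedness-convex-bodies}, together with the monotonicity in Proposition~\ref{prop:boundary-functions}.

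\textbf{Vertical convexity.} Fix $x\ge D(B)$. The intersection of $\mathcal D_m$ with the vertical line through abscissa $x$ is $\{x\}\times[f(x),g(x)]$. This is a segment, nonempty since $f(x)\le g(x)$ and both extrema are attained by Proposition~\ref{prop:attainment-extrema}. For $x<D(B)$ the intersection is empty. Hence every vertical section is convex.

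\textbf{Horizontal convexity for $m=2$.} Fix $y$ and take two points $(x_1,y),(x_2,y)\in\mathcal D_2$ with $x_1<x_2$, and let $x\in(x_1,x_2)$. We need $f(x)\le y\le g(x)$.
\begin{itemize}
\item Since $f$ is strictly increasing in every dimension, $f(x)\le f(x_2)\le y$.
\item Since $g$ is strictly increasing when $m=2$, $g(x)\ge g(x_1)\ge y$.
\end{itemize}
Therefore $(x,y)\in E=\mathcal D_2$. Equivalently, each horizontal section is $\{x:\ g(x)\ge y,\ f(x)\le y\}$, which is an intersection of two intervals by monotonicity and continuity.

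The only delicate point is the role of monotonicity of $g$. It is exactly what restricts the horizontal statement to $m=2$: in higher dimensions the superlevel set $\{g\ge y\}$ might fail to be an interval. No further obstacle is expected.
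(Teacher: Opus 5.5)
Your proof is correct and is the argument the paper intends: the paper states the corollary as an immediate consequence of the representation $\mathcal D_m=\{(x,y):\ x\ge D(B),\ f(x)\le y\le g(x)\}$. Vertical convexity comes from the sections $\{x\}\times[f(x),g(x)]$, and horizontal convexity for $m=2$ comes from the monotonicity of $f$ (every $m$) and of $g$ (for $m=2$) in Proposition~\ref{prop:boundary-functions}; your argument only uses that $f$ and $g$ are nondecreasing, so strictness is not needed.
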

\section*{Acknowledgment}
The author was supported by the French National Research Agency under the France~2030 program,
grant ANR-23-EXES-0005 (project Gardener/MASRA), and by the Occitanie Region. He would like to thank Ilias Ftouhi for useful discussions.
\bibliographystyle{plain}
\bibliography{references}

\end{document}